\def\disp{\displaystyle}
\def\dref#1{(\ref{#1})}
\theoremstyle{plain}
\newtheorem{theorem}{Theorem}[section]
\newtheorem{lemma}{Lemma}[section]
\theoremstyle{definition}
\newtheorem{definition}{Definition}[section]
\newtheorem{remark}{Remark}[section]
\numberwithin{equation}{section}
\begin{document}

\title{\bf Global weak solutions in a three-dimensional Keller-Segel-Navier-Stokes system with  nonlinear diffusion}

\author{
Jiashan Zheng\thanks{Corresponding author.   E-mail address:
 zhengjiashan2008@163.com (J.Zheng)}
 \\
    School of Mathematics and Statistics Science,\\
     Ludong University, Yantai 264025,  P.R.China \\
}
\date{}


\maketitle \vspace{0.3cm}
\noindent
\begin{abstract}
The coupled quasilinear Keller-Segel-Navier-Stokes system
$$
 \left\{
 \begin{array}{l}
   n_t+u\cdot\nabla n=\Delta n^m-\nabla\cdot(n\nabla c),\quad
x\in \Omega, t>0,\\
    c_t+u\cdot\nabla c=\Delta c-c+n,\quad
x\in \Omega, t>0,\\
u_t+\kappa(u \cdot \nabla)u+\nabla P=\Delta u+n\nabla \phi,\quad
x\in \Omega, t>0,\\
\nabla\cdot u=0,\quad
x\in \Omega, t>0\\
 \end{array}\right.\eqno(KSNF)
 $$
is considered under Neumann boundary conditions for $n$ and $c$ and no-slip boundary conditions for
$u$ in three-dimensional bounded domains $\Omega\subseteq \mathbb{R}^3$ with smooth boundary, where $m>0,\kappa\in \mathbb{R}$ are given
constants,
$\phi\in W^{1,\infty}(\Omega)$.
If
$ m> 2$,
then for all reasonably regular initial data, a corresponding initial-boundary value
problem for $(KSNF)$ possesses a globally defined weak solution.
\end{abstract}

\vspace{0.3cm}
\noindent {\bf\em Key words:}~
Navier-Stokes system; Keller-Segel model; 
Global existence; Nonlinear diffusion

\noindent {\bf\em 2010 Mathematics Subject Classification}:~ 35K55, 35Q92, 35Q35, 92C17

\newpage
\section{Introduction}

 Chemotaxis is a biological process in which cells move toward a chemically more favorable environment (see Hillen and Painter \cite{Hillen}).
   In 1970, Keller and
Segel (see Keller and Segel \cite{Keller2710,Keller79})
proposed a mathematical model for chemotaxis phenomena through a
system of parabolic equations
%
%
%
%
%
  (see e.g. Winkler et al. \cite{Bellomo1216,Horstmann791,Winkler793}, Osaki and Yagi \cite{Osaki3drr}, Horstmann
\cite{Horstmann2710}).
To describe chemotaxis of cell populations,  the signal is  produced
by the cells,  an important variant of the quasilinear
chemotaxis model
\begin{equation}
 \left\{\begin{array}{ll}
 n_t=\nabla\cdot( D(n)\nabla n)-\chi\nabla\cdot( S(n)\nabla c),\\
 \disp{ c_t=\Delta c- c +n}
 \end{array}\right.\label{722344101.2x16677}
\end{equation}
 was initially proposed by Painter and Hillen (\cite{Painter55677}, see also Winkler et al. \cite{Bellomo1216,Tao793}),
%
%
 where $n$
denotes the cell density and $c$ describes the concentration of the chemical signal
secreted by cells.
 The function $ S$ measures the chemotactic sensitivity, which may depend on $n,$ $ D(n)$
 is the diffusion function.
The results about the chemotaxis model \dref{722344101.2x16677} appear to be rather complete, which dealt with the  problem \dref{722344101.2x16677}
whether the solutions are global bounded or blow-up
(see  Cie\'{s}lak et al. \cite{Cie794,Cie791,Cie72},   Hillen \cite{Hillen}, Horstmann et al. \cite{Horstmann79}, Ishida et al. \cite{Ishida},  Kowalczyk \cite {Kowalczyk7101},
 Winkler et al. \cite{Tao794,Winkler72,Winkler793}). In fact,
 Tao and Winkler (\cite{Tao794}), 
 proved   that the solutions of \dref{722344101.2x16677}  are global and bounded provided that
  $\disp\frac{ S(n)}{ D(n)}\leq c(n+1)^{\frac{2}{N}+\varepsilon}$ for
all $n\geq0$ with some $\varepsilon > 0$ and $c > 0$,
and  $  D(n)$ satisfies some another technical conditions.
 For the more related works in this direction, we mention that a corresponding quasilinear
version, the logistic damping or the signal is  consumed
by the cells
has been deeply investigated by  Cie\'{s}lak and Stinner \cite{Cie791,Cie201712791}, Tao and Winkler \cite{Tao794,Winkler79,Winkler72} and Zheng et al. \cite{Zheng00,Zheng33312186,Zhengssssdefr23,Zhengssssssdefr23}.

In various situations, however, the migration of bacteria is furthermore substantially affected by
changes in their environment (see Winkler et al. \cite{Bellomo1216,Tao41215}).
As in the quasilinear
Keller-Segel system \dref{722344101.2x16677} where the chemoattractant is produced by cells, the corresponding  chemotaxis--fluid model is then
 is then quasilinear  Keller-Segel-Navier-Stokes system of the form
 \begin{equation}
 \left\{\begin{array}{ll}
   n_t+u\cdot\nabla n=\nabla\cdot(  D(n)\nabla n)-\nabla\cdot(  S(n)\nabla c),\quad
x\in \Omega, t>0,\\
    c_t+u\cdot\nabla c=\Delta c-c+n,\quad
x\in \Omega, t>0,\\
u_t+\kappa (u\cdot\nabla)u+\nabla P=\Delta u+n\nabla \phi,\quad
x\in \Omega, t>0,\\
\nabla\cdot u=0,\quad
x\in \Omega, t>0,\\
 \end{array}\right.\label{1ssxdcfvgb.1}
\end{equation}
where   $n$ and $c$ are denoted as before, $u$ and $P$ stand for the velocity of incompressible
fluid and the associated pressure, respectively. $\phi$  is a given potential function and $\kappa\in \mathbb{R}$ denotes the
strength of nonlinear fluid convection.
Problem  \dref{1ssxdcfvgb.1} is proposed to describe
chemotaxis--fluid interaction in cases when the evolution of the chemoattractant is essentially dominated
by production through cells (\cite{Bellomo1216,Hillen}).

 If the signal is consumed, rather than produced,
by the cells, Tuval et al. (\cite{Tuval1215}) proposed the following model
 \begin{equation}
 \left\{\begin{array}{ll}
   n_t+u\cdot\nabla n=\nabla\cdot(  D(n)\nabla n)-\nabla\cdot( nS(c)\nabla c),\quad
x\in \Omega, t>0,\\
    c_t+u\cdot\nabla c=\Delta c-nf(c),\quad
x\in \Omega, t>0,\\
u_t+\kappa (u\cdot\nabla)u+\nabla P=\Delta u+n\nabla \phi,\quad
x\in \Omega, t>0,\\
\nabla\cdot u=0,\quad
x\in \Omega, t>0.\\
 \end{array}\right.\label{1.1hhjjddssggtyy}
\end{equation}
Here  $f(c)$ is the consumption rate of the oxygen by the cells.
Approaches based on a natural energy functional, the (quasilinear) chemotaxis-(Navier-)Stokes system \dref{1.1hhjjddssggtyy} has been studied in the last few years and the main focus is on
the solvability result (see e.g.
Chae,  Kang and  Lee \cite{Chaexdd12176},
Duan, Lorz, Markowich \cite{Duan12186},
Liu and Lorz  \cite{Liu1215,Lorz1215},
 Tao and Winkler   \cite{Tao41215,Winkler31215,Winkler61215,Winkler51215}, Zhang and Zheng \cite{Zhang12176} and references therein).
 For instance, if $\kappa=0$ in \dref{1.1hhjjddssggtyy},  the model is simplified to the chemotaxis-Stokes equation.
 In \cite{Winkler21215},  Winkler showed the  global weak
solutions of \dref{1.1hhjjddssggtyy}  in bounded three-dimensional domains. Other variants of the model of \dref{1.1hhjjddssggtyy} that include  porous
medium-type diffusion and $S$ being a chemotactic sensitivity tensor, one can see Winkler (\cite{Winkler11215}) and Zheng (\cite{Zhengssssssssdefr23})
 and the references therein for details.

In contrast to problem \dref{1.1hhjjddssggtyy}, the mathematical analysis of the Keller-Segel-Stokes system \dref{1ssxdcfvgb.1} ($\kappa=0$) is quite few (Black \cite{Blackffg},
Wang et al.
\cite{Li33223321215,Wang445521215,Wangss21215}).
Among these results, Wang et al. (\cite{Wang445521215,Wangss21215})
proved  the global boundedness of solutions to the two-dimensional  and there-dimensional Keller-Segel-Stokes system \dref{1ssxdcfvgb.1} when $S$ is a tensor satisfying some dampening condition with respective to $n$.  However, for the there-dimensional fully Keller-Segel-Navier-Stokes system \dref{1ssxdcfvgb.1} ($\kappa\in \mathbb{R}$), to the best our knowledge, there is no result on global
solvability. Motivated by the above works, we will investigate the interaction of the fully quasilinear Keller-Segel-Navier-Stokes in this paper. Precisely, we shall consider the following initial-boundary
problem
\begin{equation}
 \left\{\begin{array}{ll}
   n_t+u\cdot\nabla n=\Delta n^m-\nabla\cdot(n\nabla c),\quad
x\in \Omega, t>0,\\
    c_t+u\cdot\nabla c=\Delta c-c+n,\quad
x\in \Omega, t>0,\\
u_t+\kappa (u\cdot\nabla)u+\nabla P=\Delta u+n\nabla \phi,\quad
x\in \Omega, t>0,\\
\nabla\cdot u=0,\quad
x\in \Omega, t>0,\\
 \disp{\nabla n\cdot\nu=\nabla c\cdot\nu=0,u=0,}\quad
x\in \partial\Omega, t>0,\\
\disp{n(x,0)=n_0(x),c(x,0)=c_0(x),u(x,0)=u_0(x),}\quad
x\in \Omega,\\
 \end{array}\right.\label{1.1}
\end{equation}
where $\Omega\subseteq \mathbb{R}^3$ is a bounded    domain with smooth boundary.

In this paper, one of a
%
key role in our approach  is based on pursuing the time evolution of a coupled
functional of the form
$\int_{\Omega}n^{m-1}(\cdot,t) +\int_{\Omega} {c}^{2}(\cdot,t)+\int_{\Omega} |{u}(\cdot,t)|^{2}
 $
 (see Lemma \ref{lemmaghjssddgghhmk4563025xxhjklojjkkk})
which is a new (natural gradient-like energy functional) estimate of \dref{1.1}. 

This paper is organized as follows. In Section 2, we firstly give  the definition of weak solutions to \dref{1.1}, the  regularized problems of \dref{1.1} and
 state the main results of this paper and prove the local existence of classical solution to appropriately regularized problems of \dref{1.1}.
%
Section 3  and Section 4 will be devoted to an analysis of regularized problems of \dref{1.1}.
 On the basis of the compactness properties thereby implied, in Section 5 we shall finally
pass to the limit along an adequate sequence of numbers $\varepsilon = \varepsilon_j\searrow0$
and thereby verify the main results.

\section{Preliminaries and  main results}
Due to the strongly nonlinear term $(u \cdot \nabla)u$ and $\Delta n^m,$ the problem \dref{1.1} has no
classical solutions in general, and thus we consider its weak solutions in the following sense.
We first specify the notion of weak solution to which we will refer in the sequel.
\begin{definition}\label{df1}
Let $T > 0$ and $(n_0, c_0, u_0)$ fulfills
\dref{ccvvx1.731426677gg}.
Then a triple of functions $(n, c, u)$ is
called a weak solution of \dref{1.1} if the following conditions are satisfied
\begin{equation}
 \left\{\begin{array}{ll}
   n\in L_{loc}^1(\bar{\Omega}\times[0,T)),\\
    c \in L_{loc}^1([0,T); W^{1,1}(\Omega)),\\
u \in  L_{loc}^1([0,T); W^{1,1}(\Omega)),\\
 \end{array}\right.\label{dffff1.1fghyuisdakkklll}
\end{equation}
where $n\geq 0$ and $c\geq 0$ in
$\Omega\times(0, T)$ as well as $\nabla\cdot u = 0$ in the distributional sense in
 $\Omega\times(0, T)$,
moreover,
\begin{equation}\label{726291hh}
\begin{array}{rl}
 &u\otimes u \in L^1_{loc}(\bar{\Omega}\times [0, \infty);\mathbb{R}^{3\times 3})~~\mbox{and}~~~ n^m~\mbox{belong to}~~ L^1_{loc}(\bar{\Omega}\times [0, \infty)),\\
  &cu,~ ~nu ~~\mbox{and}~~n|\nabla c|~ \mbox{belong to}~~
L^1_{loc}(\bar{\Omega}\times [0, \infty);\mathbb{R}^{3})
\end{array}
\end{equation}
and
\begin{equation}
\begin{array}{rl}\label{eqx45xx12112ccgghh}
\disp{-\int_0^{T}\int_{\Omega}n\varphi_t-\int_{\Omega}n_0\varphi(\cdot,0)  }=&\disp{
\int_0^T\int_{\Omega}n^m\Delta\varphi+\int_0^T\int_{\Omega}n
\nabla c\cdot\nabla\varphi}\\
&+\disp{\int_0^T\int_{\Omega}nu\cdot\nabla\varphi}\\
\end{array}
\end{equation}
for any $\varphi\in C_0^{\infty} (\bar{\Omega}\times[0, T))$ satisfying
 $\frac{\partial\varphi}{\partial\nu}= 0$ on $\partial\Omega\times (0, T)$
  as well as
  \begin{equation}
\begin{array}{rl}\label{eqx45xx12112ccgghhjj}
\disp{-\int_0^{T}\int_{\Omega}c\varphi_t-\int_{\Omega}c_0\varphi(\cdot,0)  }=&\disp{-
\int_0^T\int_{\Omega}\nabla c\cdot\nabla\varphi-\int_0^T\int_{\Omega}c\varphi+\int_0^T\int_{\Omega}n\varphi+
\int_0^T\int_{\Omega}cu\cdot\nabla\varphi}\\
\end{array}
\end{equation}
for any $\varphi\in C_0^{\infty} (\bar{\Omega}\times[0, T))$  and
\begin{equation}
\begin{array}{rl}\label{eqx45xx12112ccgghhjjgghh}
\disp{-\int_0^{T}\int_{\Omega}u\varphi_t-\int_{\Omega}u_0\varphi(\cdot,0) -\kappa
\int_0^T\int_{\Omega} u\otimes u\cdot\nabla\varphi }=&\disp{-
\int_0^T\int_{\Omega}\nabla u\cdot\nabla\varphi-
\int_0^T\int_{\Omega}n\nabla\phi\cdot\varphi}\\
\end{array}
\end{equation}
for any $\varphi\in C_0^{\infty} (\bar{\Omega}\times[0, T);\mathbb{R}^3)$ fulfilling
$\nabla\varphi\equiv 0$ in
 $\Omega\times(0, T)$.
 If $\Omega\times (0,\infty)\longrightarrow \mathbb{R}^5$ is a weak solution of \dref{1.1} in
 $\Omega\times(0, T)$ for all $T > 0$, then we call
$(n, c, u)$ a global weak solution of \dref{1.1}.
\end{definition}

Throughout this paper,
we assume that 
\begin{equation}
\phi\in W^{1,\infty}(\Omega)
\label{dd1.1fghyuisdakkkllljjjkk}
\end{equation}
 and the initial data
$(n_0, c_0, u_0)$ fulfills
\begin{equation}\label{ccvvx1.731426677gg}
\left\{
\begin{array}{ll}
\displaystyle{n_0\in C^\kappa(\bar{\Omega})~~\mbox{for certain}~~ \kappa > 0~~ \mbox{with}~~ n_0\geq0 ~~\mbox{in}~~\Omega},\\
\displaystyle{c_0\in W^{1,\infty}(\Omega)~~\mbox{with}~~c_0\geq0~~\mbox{in}~~\bar{\Omega},}\\
\displaystyle{u_0\in D(A^\gamma_{r})~~\mbox{for~~ some}~~\gamma\in ( \frac{1}{2}, 1)~~\mbox{and any}~~ {r}\in (1,\infty),}\\
\end{array}
\right.
\end{equation}
where $A_{r}$ denotes the Stokes operator with domain $D(A_{r}) := W^{2,{r}}(\Omega)\cap  W^{1,{r}}_0(\Omega)
\cap L^{r}_{\sigma}(\Omega)$,
and
$L^{r}_{\sigma}(\Omega) := \{\varphi\in  L^{r}(\Omega)|\nabla\cdot\varphi = 0\}$ for ${r}\in(1,\infty)$
 (\cite{Sohr}).

\begin{theorem}\label{theorem3}
Let 
 \dref{dd1.1fghyuisdakkkllljjjkk}
 hold, and suppose that 
\begin{equation}\label{x1.73142vghf48}m> 2.
\end{equation}
Then for any choice of $n_0, c_0$ and $u_0$ fulfilling \dref{ccvvx1.731426677gg}, the problem \dref{1.1} possesses at least
one global weak solution $(n, c, u, P)$
 in the sense of Definition \ref{df1}. 
\end{theorem}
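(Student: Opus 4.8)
The plan is to obtain the weak solution as a limit of classical solutions of a family of non-degenerate approximate problems, with essentially all compactness flowing from the single coupled estimate recorded in Lemma \ref{lemmaghjssddgghhmk4563025xxhjklojjkkk}. First I would fix approximations indexed by $\varepsilon\in(0,1)$ in which the degenerate diffusion $\Delta n^m$ is replaced by a uniformly parabolic operator (e.g. $\nabla\cdot((n_\varepsilon+\varepsilon)^{m-1}\nabla n_\varepsilon)$) and the convection $\kappa(u\cdot\nabla)u$ is tamed by a Yosida regularization $\kappa((1+\varepsilon A)^{-1}u_\varepsilon\cdot\nabla)u_\varepsilon$ of the Stokes operator, so that the local classical solvability stated earlier applies; the $\varepsilon$-uniform bounds below then upgrade local to global existence for each fixed $\varepsilon$. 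The structural features preserved by this regularization, namely $\nabla\cdot u_\varepsilon=0$, the no-slip condition, and the Neumann conditions for $n_\varepsilon,c_\varepsilon$, are exactly the ones used in the energy computation.

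The heart of the argument is to differentiate $\mathcal{E}(t):=\int_\Omega n_\varepsilon^{m-1}+a\int_\Omega c_\varepsilon^2+b\int_\Omega|u_\varepsilon|^2$ along the flow. Testing the first equation by $(m-1)n_\varepsilon^{m-2}$, the convective contribution $\int_\Omega u_\varepsilon\cdot\nabla n_\varepsilon^{m-1}$ drops out by $\nabla\cdot u_\varepsilon=0$, while the diffusion produces $-\tfrac{m(m-2)}{m-1}\int_\Omega|\nabla n_\varepsilon^{m-1}|^2$; this is genuinely dissipative precisely because $m>2$ (at $m=2$ it degenerates and for $m<2$ its sign flips), which is the role of hypothesis \eqref{x1.73142vghf48}. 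The cross term from $-\nabla\cdot(n_\varepsilon\nabla c_\varepsilon)$ equals $(m-2)\int_\Omega\nabla n_\varepsilon^{m-1}\cdot\nabla c_\varepsilon$ and is split by Young's inequality between this dissipation and $\int_\Omega|\nabla c_\varepsilon|^2$. Testing the second equation by $2a c_\varepsilon$ and the third by $2b u_\varepsilon$, and again exploiting $\nabla\cdot u_\varepsilon=0$ (so that both the $u\cdot\nabla c$ and the regularized $(u\cdot\nabla)u$ terms vanish), yields the dissipation $-2a\int_\Omega(|\nabla c_\varepsilon|^2+c_\varepsilon^2)-2b\int_\Omega|\nabla u_\varepsilon|^2$ together with the remaining couplings $2a\int_\Omega n_\varepsilon c_\varepsilon$ and $2b\int_\Omega n_\varepsilon u_\varepsilon\cdot\nabla\phi$. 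Choosing $a$ large enough to dominate the $\int_\Omega|\nabla c_\varepsilon|^2$ generated by the chemotaxis term, it remains only to absorb these two couplings.

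Here $m>2$ pays off a second time. The gradient bound on $n_\varepsilon^{m-1}$ together with $H^1(\Omega)\hookrightarrow L^6(\Omega)$ gives $n_\varepsilon\in L^{6(m-1)}$ with $6(m-1)>6$, and interpolating against the conserved mass $\int_\Omega n_\varepsilon=\int_\Omega n_0$ bounds $\|n_\varepsilon\|_{L^{6/5}}$ by a sub-quadratic power of $\|n_\varepsilon^{m-1}\|_{H^1}$ (the relevant exponent being $\tfrac{2}{6m-7}$, comfortably below $2$), which together with $\|c_\varepsilon\|_{L^6},\|u_\varepsilon\|_{L^6}\le C\|\cdot\|_{H^1}$ lets Young's inequality absorb $\int_\Omega n_\varepsilon c_\varepsilon$ and $\int_\Omega n_\varepsilon u_\varepsilon\cdot\nabla\phi$ into the dissipation. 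This produces $\mathcal{E}'(t)+\delta\,\mathcal{D}(t)\le C$ with $\mathcal{D}$ collecting $\int_\Omega|\nabla n_\varepsilon^{m-1}|^2$, $\int_\Omega|\nabla c_\varepsilon|^2$, $\int_\Omega c_\varepsilon^2$ and $\int_\Omega|\nabla u_\varepsilon|^2$; a Gronwall/time-integration step then gives $\varepsilon$-uniform bounds $n_\varepsilon^{m-1}\in L^\infty(0,T;L^1)\cap L^2(0,T;H^1)$, $c_\varepsilon\in L^\infty(0,T;L^2)\cap L^2(0,T;H^1)$ and $u_\varepsilon\in L^\infty(0,T;L^2)\cap L^2(0,T;H^1_0)$. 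Space–time interpolation upgrades these to Lebesgue bounds (e.g. $n_\varepsilon^{m-1}\in L^{8/3}(\Omega\times(0,T))$), which verify every integrability requirement of Definition \ref{df1}: since $m>2$ the quantities $n_\varepsilon^m$, $n_\varepsilon u_\varepsilon$, $c_\varepsilon u_\varepsilon$, $n_\varepsilon|\nabla c_\varepsilon|$ and $u_\varepsilon\otimes u_\varepsilon$ are then bounded in $L^1_{loc}(\bar\Omega\times[0,\infty))$.

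Finally I would pass to the limit along $\varepsilon_j\searrow0$. Weak-$\ast$ compactness handles the linear terms, while for the nonlinearities the decisive input is strong convergence of $n_{\varepsilon_j}$: combining the spatial bound $\nabla n_\varepsilon^{m-1}\in L^2$ with a bound on $\partial_t n_\varepsilon$ in a negative-order space (read off from the equation, each term being a divergence of an $L^1$-function) in an Aubin–Lions–type lemma adapted to the monotone nonlinearity $s\mapsto s^{m-1}$ yields $n_{\varepsilon_j}\to n$ in $L^2(\Omega\times(0,T))$ and a.e., with the analogous argument giving strong $L^2$ convergence of $c_{\varepsilon_j}$ and $u_{\varepsilon_j}$. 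Then strong convergence of $n_{\varepsilon_j}$ against weak convergence of $\nabla c_{\varepsilon_j}$ identifies $n_{\varepsilon_j}\nabla c_{\varepsilon_j}\rightharpoonup n\nabla c$, strong convergence of $u_{\varepsilon_j}$ identifies $u_{\varepsilon_j}\otimes u_{\varepsilon_j}\to u\otimes u$ and $n_{\varepsilon_j}u_{\varepsilon_j}\rightharpoonup nu$, and a.e. convergence plus the $L^{8/3}$-type equi-integrability identifies $n_{\varepsilon_j}^m\to n^m$ in $L^1$, so the weak formulations \eqref{eqx45xx12112ccgghh}–\eqref{eqx45xx12112ccgghhjjgghh} survive the limit. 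I expect the genuine obstacle to be this last strong-convergence step for $n_\varepsilon$: the diffusion degenerates where $n_\varepsilon$ vanishes, so one cannot control $\partial_t n_\varepsilon^{m-1}$ directly and must transfer compactness through the nonlinear change of variables, the delicate technical point that the degenerate exponent $m>2$ both forces and, via the estimates above, makes tractable.
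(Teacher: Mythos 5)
Your overall strategy coincides with the paper's own proof: the same regularization (nondegenerate diffusion together with Yosida-regularized convection, as in \dref{1.1fghyuisda}), the same coupled functional $\int_\Omega n_\varepsilon^{m-1}+a\int_\Omega c_\varepsilon^2+b\int_\Omega|u_\varepsilon|^2$ that underlies Lemma \ref{lemmaghjssddgghhmk4563025xxhjklojjkkk}, the same Gagliardo--Nirenberg interpolation against the conserved mass to absorb the couplings (with $m>2$ entering exactly where you say it does), the same space--time interpolation (Lemma \ref{lemmddaghjssddgghhmk4563025xxhjklojjkkk}), and the same Aubin--Lions limit passage in which a.e.\ convergence of $n_\varepsilon$ is extracted from compactness of $n_\varepsilon^{m-1}$ and the nonlinear terms are identified via Egorov-type arguments (Sections 4--5 of the paper).

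There is, however, one genuine gap: you dispose of global solvability of the regularized problems in a single clause, asserting that ``the $\varepsilon$-uniform bounds below then upgrade local to global existence for each fixed $\varepsilon$.'' This does not follow. The extensibility criterion \dref{1.163072x} attached to Lemma \ref{lemma70} states that if $T_{max,\varepsilon}<\infty$ then $\|n_\varepsilon(\cdot,t)\|_{L^\infty(\Omega)}+\|c_\varepsilon(\cdot,t)\|_{W^{1,\infty}(\Omega)}+\|A^\gamma u_\varepsilon(\cdot,t)\|_{L^{2}(\Omega)}$ must blow up, whereas your energy estimate only controls $n_\varepsilon$ in $L^\infty(0,T;L^{m-1}(\Omega))$ and $c_\varepsilon,u_\varepsilon$ in $L^\infty(0,T;L^2(\Omega))\cap L^2(0,T;W^{1,2}(\Omega))$ --- norms far too weak to contradict \dref{1.163072x}. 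Closing this gap is precisely the content of Lemmata \ref{lemma630jklhhjj}--\ref{kkklemmaghjmk4563025xxhjklojjkkk} in the paper: one tests the projected Stokes equation with $Au_\varepsilon$ to obtain ($\varepsilon$-dependent) bounds on $\nabla u_\varepsilon$ and $\Delta u_\varepsilon$, then bounds $\nabla c_\varepsilon$, then runs an $L^p$-estimate on $n_\varepsilon$ for arbitrarily large $p$, then invokes smoothing of the Stokes and heat semigroups to control $A^\gamma u_\varepsilon$, $\|u_\varepsilon\|_{L^\infty}$ and $\|\nabla c_\varepsilon\|_{L^{2\beta}}$, and finally a Moser-type iteration for $\|n_\varepsilon\|_{L^\infty}$ --- all on $(0,T_{max,\varepsilon})$ with constants that may (and do) depend on $\varepsilon$. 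Without this bootstrap your uniform estimates live only on a possibly finite interval $(0,T_{max,\varepsilon})$, and the limiting object could not be claimed to be a \emph{global} weak solution. The remainder of your outline (the dual-space estimates on $\partial_t n_\varepsilon^{m-1}$, $\partial_t c_\varepsilon$, $\partial_t u_\varepsilon$, and the identification of $n\nabla c$, $nu$, $u\otimes u$, $n^m$ in the limit) matches the paper's argument and is sound.
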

\begin{remark}
From Theorem \ref{theorem3}, we conclude that  if
the exponent $m$ of nonlinear diffusion  is large than $2$, then
 model \dref{1.1} exists a global solution, which implies the nonlinear diffusion term  benefits the global of solutions, which seems partly
 extends the results of Tao and Winkler \cite {Tao41215}, who proved the possibility of boundedness,
in the case that $m=1$, the coefficient of logistic source suitably large  and the strength of nonlinear fluid convection $\kappa=0$.

%
%
%
%
%
%
%
%
\end{remark}

Our intention is to construct a global weak solution of \dref{1.1} as the limit of smooth solutions of
appropriately regularized problems. To this end,
in order to deal with the strongly nonlinear term $(u \cdot \nabla)u$ and $\Delta n^m$,
%
we
need to introduce the following approximating equation of \dref{1.1}:
\begin{equation}
 \left\{\begin{array}{ll}
   n_{\varepsilon t}+u_{\varepsilon}\cdot\nabla n_{\varepsilon}=\Delta (n_{\varepsilon}+\varepsilon)^m-\nabla\cdot(n_{\varepsilon}\nabla c_{\varepsilon}),\quad
x\in \Omega, t>0,\\
    c_{\varepsilon t}+u_{\varepsilon}\cdot\nabla c_{\varepsilon}=\Delta c_{\varepsilon}-c_{\varepsilon}+n_{\varepsilon},\quad
x\in \Omega, t>0,\\
u_{\varepsilon t}+\nabla P_{\varepsilon}=\Delta u_{\varepsilon}-\kappa (Y_{\varepsilon}u_{\varepsilon} \cdot \nabla)u_{\varepsilon}+n_{\varepsilon}\nabla \phi,\quad
x\in \Omega, t>0,\\
\nabla\cdot u_{\varepsilon}=0,\quad
x\in \Omega, t>0,\\
 \disp{\nabla n_{\varepsilon}\cdot\nu=\nabla c_{\varepsilon}\cdot\nu=0,u_{\varepsilon}=0,\quad
x\in \partial\Omega, t>0,}\\
\disp{n_{\varepsilon}(x,0)=n_0(x),c_{\varepsilon}(x,0)=c_0(x),u_{\varepsilon}(x,0)=u_0(x)},\quad
x\in \Omega,\\
 \end{array}\right.\label{1.1fghyuisda}
\end{equation}
where
\begin{equation}
 \begin{array}{ll}
 Y_{\varepsilon}w := (1 + \varepsilon A)^{-1}w ~~~~\mbox{for all}~~ w\in L^2_{\sigma}(\Omega)
 \end{array}\label{aasddffgg1.1fghyuisda}
\end{equation}
is the standard Yosida approximation.
In light of the well-established fixed point arguments (see \cite{Winkler11215}, Lemma 2.1 of \cite{Painter55677} and Lemma 2.1 of \cite{Winkler51215}),
we can prove that \dref{1.1fghyuisda} is locally solvable in classical sense, which is stated as the following lemma.
%
%
\begin{lemma}\label{lemma70}
Assume
that $\varepsilon\in(0,1).$
%
Then there exist $T_{max,\varepsilon}\in  (0,\infty]$ and
a classical solution $(n_\varepsilon, c_\varepsilon, u_\varepsilon, P_\varepsilon)$ of \dref{1.1fghyuisda} in
$\Omega\times(0, T_{max,\varepsilon})$ such that
\begin{equation}
 \left\{\begin{array}{ll}
 n_\varepsilon\in C^0(\bar{\Omega}\times[0,T_{max,\varepsilon}))\cap C^{2,1}(\bar{\Omega}\times(0,T_{max,\varepsilon})),\\
  c_\varepsilon\in  C^0(\bar{\Omega}\times[0,T_{max,\varepsilon}))\cap C^{2,1}(\bar{\Omega}\times(0,T_{max,\varepsilon})),\\
  u_\varepsilon\in  C^0(\bar{\Omega}\times[0,T_{max,\varepsilon}))\cap C^{2,1}(\bar{\Omega}\times(0,T_{max,\varepsilon})),\\
  P_\varepsilon\in  C^{1,0}(\bar{\Omega}\times(0,T_{max,\varepsilon})),\\
   \end{array}\right.\label{1.1ddfghyuisda}
\end{equation}
 classically solving \dref{1.1fghyuisda} in $\Omega\times[0,T_{max,\varepsilon})$.
%
Moreover,  $n_\varepsilon$ and $c_\varepsilon$ are nonnegative in
$\Omega\times(0, T_{max,\varepsilon})$, and
\begin{equation}
\|n_\varepsilon(\cdot, t)\|_{L^\infty(\Omega)}+\|c_\varepsilon(\cdot, t)\|_{W^{1,\infty}(\Omega)}+\|A^\gamma u_\varepsilon(\cdot, t)\|_{L^{2}(\Omega)}\rightarrow\infty~~ \mbox{as}~~ t\rightarrow T_{max,\varepsilon},
\label{1.163072x}
\end{equation}
where $\gamma$ is given by \dref{ccvvx1.731426677gg}.
\end{lemma}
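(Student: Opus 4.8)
The plan is to construct the local classical solution by a contraction-mapping argument on a short time interval and then to read off the extensibility criterion \dref{1.163072x} from a standard continuation principle. The two regularizations built into \dref{1.1fghyuisda} are precisely what make this tractable: the shifted diffusion $\Delta(n_\varepsilon+\varepsilon)^m$ has diffusivity $m(n_\varepsilon+\varepsilon)^{m-1}$, which on any set where $n_\varepsilon$ is bounded is itself bounded and bounded away from zero, so the $n$-equation is uniformly (nondegenerate) parabolic; and the Yosida operator $Y_\varepsilon=(1+\varepsilon A)^{-1}$ from \dref{aasddffgg1.1fghyuisda} maps $L^2_\sigma(\Omega)$ smoothingly into $D(A)$, so the transport speed $Y_\varepsilon u_\varepsilon$ in the fluid equation is controlled in $L^\infty(\Omega)$ by $\|u_\varepsilon\|_{L^2(\Omega)}$, which tames the otherwise problematic convective nonlinearity $(Y_\varepsilon u_\varepsilon\cdot\nabla)u_\varepsilon$.

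Concretely, I would fix $T\in(0,1)$ to be determined and work in a closed ball of a product Banach space such as $X_T:=C^0([0,T];C^0(\bar\Omega))\times C^0([0,T];W^{1,q}(\Omega))\times C^0([0,T];D(A^\gamma))$ for a suitably large $q$ and with $\gamma$ as in \dref{ccvvx1.731426677gg}, centered at the constant-in-time initial data. On this ball I define a map $\Phi:(\tilde n,\tilde c,\tilde u)\mapsto(n,c,u)$ by solving the three equations with their couplings frozen: the Stokes component $u$ solves the linear problem $u_t+\nabla P=\Delta u-\kappa(Y_\varepsilon\tilde u\cdot\nabla)\tilde u+\tilde n\nabla\phi$ and is represented through the Stokes semigroup by $u(t)=e^{-tA}u_0+\int_0^t e^{-(t-s)A}\mathcal{P}[-\kappa(Y_\varepsilon\tilde u\cdot\nabla)\tilde u+\tilde n\nabla\phi]\,ds$, where $\mathcal{P}$ is the Helmholtz projection; the smoothing estimates $\|A^\gamma e^{-tA}\|\lesssim t^{-\gamma}$ together with $\phi\in W^{1,\infty}(\Omega)$ from \dref{dd1.1fghyuisdakkkllljjjkk} and the boundedness of $Y_\varepsilon$ give the required bounds and Lipschitz dependence. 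The $c$-component solves the linear Neumann heat problem $c_t=\Delta c-c+\tilde n-\tilde u\cdot\nabla\tilde c$, again handled by the Neumann heat semigroup with its $L^p$-$W^{1,q}$ smoothing. The $n$-component is the only quasilinear one; here I would either invoke Amann's theory of quasilinear parabolic systems or freeze the diffusivity at $\tilde n$ to obtain the linear nondegenerate problem $n_t=\nabla\cdot(m(\tilde n+\varepsilon)^{m-1}\nabla n)-\nabla\cdot(\tilde n\nabla\tilde c)-\tilde u\cdot\nabla\tilde n$ and apply linear parabolic theory, using $n_0\in C^\kappa(\bar\Omega)$ for the initial regularity.

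Choosing $T$ small enough that all the time-integral contributions carry a factor $T^\theta$ with $\theta>0$, I would verify that $\Phi$ maps the ball into itself and is a contraction, so Banach's fixed point theorem yields a unique mild solution on $[0,T]$; iterating from the reached time gives a maximal existence time $T_{\max,\varepsilon}$. Parabolic $L^p$ and Schauder estimates then bootstrap this mild solution to the classical regularity \dref{1.1ddfghyuisda}: once $n_\varepsilon,c_\varepsilon,u_\varepsilon$ and hence the coupling terms are Hölder continuous, Schauder theory upgrades them to $C^{2,1}$ in the interior and up to the boundary under the stated Neumann and no-slip conditions, and the pressure $P_\varepsilon$ is recovered from the Stokes problem. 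Nonnegativity of $c_\varepsilon$ follows from the parabolic maximum principle applied to its equation (with $n_\varepsilon\geq0$ as source and $c_0\geq0$), and nonnegativity of $n_\varepsilon$ follows by testing its divergence-form equation against the negative part $(n_\varepsilon)_-$, the chemotactic flux carrying the factor $n_\varepsilon$ so that $n_\varepsilon\equiv0$ is preserved from below.

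Finally, the blow-up alternative \dref{1.163072x} is the standard continuation criterion: if $\|n_\varepsilon(\cdot,t)\|_{L^\infty(\Omega)}+\|c_\varepsilon(\cdot,t)\|_{W^{1,\infty}(\Omega)}+\|A^\gamma u_\varepsilon(\cdot,t)\|_{L^2(\Omega)}$ stayed bounded as $t\uparrow T_{\max,\varepsilon}<\infty$, then the fixed-point construction could be restarted from a time close to $T_{\max,\varepsilon}$ with a uniform existence step, extending the solution beyond $T_{\max,\varepsilon}$ and contradicting maximality. The main obstacle throughout is the quasilinear, cross-diffusive $n$-equation coupled to $\nabla c$ and to the fluid transport: matching function spaces so that $\Phi$ is simultaneously a self-map and a contraction requires care in balancing the regularity demanded by the chemotactic term $\nabla\cdot(n\nabla c)$, which needs control of $\nabla c$ and hence the $W^{1,q}$ slot for $c$, against that produced by the nondegenerate but $n$-dependent diffusion, and this is where the $\varepsilon$-shift is indispensable.
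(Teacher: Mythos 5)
Your proposal is correct and takes essentially the same approach as the paper: the paper offers no written-out proof of this lemma, instead invoking "well-established fixed point arguments" from its cited references (Winkler's chemotaxis--(Navier--)Stokes papers and Painter--Hillen), which are exactly the contraction-mapping scheme with Stokes/heat semigroups, frozen-coefficient treatment of the nondegenerate $\varepsilon$-shifted diffusion, Schauder bootstrap to classical regularity, maximum-principle nonnegativity, and restart-based continuation criterion that you describe. Your sketch is a faithful instantiation of the argument the paper delegates to those references.
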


\section{A priori estimates}
In this section, we are going to establish an iteration step to develop the main ingredient of our result. The iteration depends on a series of a priori estimate.
The proof of this lemma is very similar to that of Lemmata 2.2 and 2.6  of  \cite{Tao41215}, so we omit its proof here.

\begin{lemma}\label{fvfgfflemma45}
There exists 
$\lambda > 0$ independent of $\varepsilon$ such that the solution of \dref{1.1fghyuisda} satisfies
%
%
\begin{equation}
\int_{\Omega}{n_{\varepsilon}}+\int_{\Omega}{c_{\varepsilon}}\leq \lambda~~\mbox{for all}~~ t\in(0, T_{max,\varepsilon}).
\label{ddfgczhhhh2.5ghju48cfg924ghyuji}
\end{equation}
%
%
\end{lemma}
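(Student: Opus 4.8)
The plan is to test the first two equations of \dref{1.1fghyuisda} against the constant function $1$, i.e.\ to integrate them over $\Omega$, and to exploit the conservative structure of the equations together with the Neumann boundary conditions for $n_\varepsilon,c_\varepsilon$ and the no-slip, divergence-free conditions on $u_\varepsilon$.

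First I would integrate the $n_\varepsilon$-equation over $\Omega$. Since $\nabla n_\varepsilon\cdot\nu=0$ on $\partial\Omega$, the diffusion term contributes $\int_\Omega\Delta(n_\varepsilon+\varepsilon)^m=\int_{\partial\Omega}m(n_\varepsilon+\varepsilon)^{m-1}\nabla n_\varepsilon\cdot\nu=0$, and the taxis term likewise vanishes because $\int_\Omega\nabla\cdot(n_\varepsilon\nabla c_\varepsilon)=\int_{\partial\Omega}n_\varepsilon\nabla c_\varepsilon\cdot\nu=0$. For the convective term, the identity $u_\varepsilon\cdot\nabla n_\varepsilon=\nabla\cdot(n_\varepsilon u_\varepsilon)$, valid since $\nabla\cdot u_\varepsilon=0$, combined with $u_\varepsilon=0$ on $\partial\Omega$, gives $\int_\Omega u_\varepsilon\cdot\nabla n_\varepsilon=0$. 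Hence $\frac{d}{dt}\int_\Omega n_\varepsilon=0$, so $\int_\Omega n_\varepsilon(\cdot,t)=\int_\Omega n_0$ is conserved on $(0,T_{max,\varepsilon})$, which already controls the first summand by an $\varepsilon$-independent constant. Next I would integrate the $c_\varepsilon$-equation over $\Omega$; the Laplacian and convective terms vanish by the very same reasoning, while the zeroth-order terms survive to yield the scalar ODE $\frac{d}{dt}\int_\Omega c_\varepsilon=-\int_\Omega c_\varepsilon+\int_\Omega n_\varepsilon$. Inserting the conserved mass from the previous step turns the right-hand side into $-\int_\Omega c_\varepsilon+\int_\Omega n_0$, and an elementary comparison (equivalently, explicit integration of $y'=-y+\int_\Omega n_0$) shows $\int_\Omega c_\varepsilon(\cdot,t)\leq\max\{\int_\Omega c_0,\int_\Omega n_0\}$ for all $t$. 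Adding the two estimates establishes the claim with, for instance, $\lambda:=\int_\Omega n_0+\max\{\int_\Omega c_0,\int_\Omega n_0\}$, which is visibly independent of $\varepsilon$.

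There is no genuine obstacle here: the bound rests entirely on the divergence form of the $n_\varepsilon$-equation and the dissipative structure of the $c_\varepsilon$-equation. The only points requiring a little care are checking that every boundary contribution truly cancels under the prescribed Neumann and no-slip conditions, and observing that the Yosida-regularized convection $\kappa(Y_\varepsilon u_\varepsilon\cdot\nabla)u_\varepsilon$ is irrelevant, since the argument never invokes the third equation of \dref{1.1fghyuisda}.
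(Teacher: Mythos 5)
Your proposal is correct and is essentially the argument the paper has in mind: the paper omits the proof, citing Lemmata 2.2 and 2.6 of \cite{Tao41215}, which rest on exactly this reasoning — integrate the $n_\varepsilon$-equation to obtain mass conservation from the divergence structure and the boundary conditions, then integrate the $c_\varepsilon$-equation and use an ODE comparison for $y'=-y+\int_\Omega n_0$. Your accounting of the vanishing boundary terms and the choice $\lambda=\int_\Omega n_0+\max\left\{\int_\Omega c_0,\int_\Omega n_0\right\}$ are both sound, so nothing is missing.
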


\begin{lemma}\label{lemmaghjssddgghhmk4563025xxhjklojjkkk}
Let $m>2$.
Then there exists $C>0$ independent of $\varepsilon$ such that the solution of \dref{1.1fghyuisda} satisfies
\begin{equation}
\begin{array}{rl}
&\disp{\int_{\Omega}(n_{\varepsilon}+\varepsilon)^{m-1}+\int_{\Omega}   c_{\varepsilon}^2+\int_{\Omega}  | {u_{\varepsilon}}|^2\leq C~~~\mbox{for all}~~ t\in (0, T_{max,\varepsilon}).}\\
\end{array}
\label{czfvgb2.5ghhjuyuccvviihjj}
\end{equation}
In addition,
for each $T\in(0, T_{max,\varepsilon})$, one can find a constant $C > 0$ independent of $\varepsilon$ such that
\begin{equation}
\begin{array}{rl}
&\disp{\int_{0}^T\int_{\Omega} \left[ (n_{\varepsilon}+\varepsilon)^{2m-4} |\nabla {n_{\varepsilon}}|^2+ |\nabla {c_{\varepsilon}}|^2+ |\nabla {u_{\varepsilon}}|^2\right]\leq C.}\\
\end{array}
\label{bnmbncz2.5ghhjuyuivvbnnihjj}
\end{equation}
\end{lemma}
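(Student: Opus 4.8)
The plan is to test each of the first three equations of \dref{1.1fghyuisda} against a suitable power of the corresponding unknown and to follow the time evolution of the coupled functional
\[
y_\varepsilon(t):=\int_\Omega (n_\varepsilon+\varepsilon)^{m-1}+a\int_\Omega c_\varepsilon^2+b\int_\Omega |u_\varepsilon|^2 ,
\]
with positive weights $a,b$ to be fixed later. Testing the $n_\varepsilon$-equation by $(m-1)(n_\varepsilon+\varepsilon)^{m-2}$, the solenoidality of $u_\varepsilon$ together with $u_\varepsilon=0$ on $\partial\Omega$ removes the convective term, integration by parts in the diffusion term produces the good dissipation $m(m-1)(m-2)\int_\Omega (n_\varepsilon+\varepsilon)^{2m-4}|\nabla n_\varepsilon|^2$ (here the hypothesis $m>2$ is essential to guarantee a positive coefficient), and the cross-diffusion term yields $(m-1)(m-2)\int_\Omega (n_\varepsilon+\varepsilon)^{m-3}n_\varepsilon\nabla n_\varepsilon\cdot\nabla c_\varepsilon$. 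Testing the $c_\varepsilon$-equation by $c_\varepsilon$ gives, after cancelling convection, an identity with dissipation $\int_\Omega |\nabla c_\varepsilon|^2+\int_\Omega c_\varepsilon^2$ and forcing $\int_\Omega n_\varepsilon c_\varepsilon$. Testing the Stokes equation by $u_\varepsilon$ makes the pressure and, crucially, the Yosida-regularized convective term $-\kappa\int_\Omega u_\varepsilon\cdot((Y_\varepsilon u_\varepsilon)\cdot\nabla)u_\varepsilon$ vanish, since $\nabla\cdot Y_\varepsilon u_\varepsilon=0$ and $u_\varepsilon=0$ on $\partial\Omega$, leaving dissipation $\int_\Omega |\nabla u_\varepsilon|^2$ and forcing $\int_\Omega n_\varepsilon\nabla\phi\cdot u_\varepsilon$.

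Next I would couple the three identities. The cross-diffusion term is treated by Young's inequality after using $n_\varepsilon\le n_\varepsilon+\varepsilon$, so that half of it is absorbed by the $n_\varepsilon$-dissipation and the remainder is bounded by a fixed multiple of $\int_\Omega |\nabla c_\varepsilon|^2$; choosing $a$ large enough, this is absorbed by $a\int_\Omega|\nabla c_\varepsilon|^2$. The forcing $\int_\Omega n_\varepsilon c_\varepsilon$ is split by Young into $\int_\Omega c_\varepsilon^2$ (absorbed by $a\int_\Omega c_\varepsilon^2$) and $\int_\Omega n_\varepsilon^2$. The fluid forcing is handled by H\"older and the Poincar\'e--Sobolev embedding $\|u_\varepsilon\|_{L^6(\Omega)}\le C\|\nabla u_\varepsilon\|_{L^2(\Omega)}$, giving $\int_\Omega n_\varepsilon\nabla\phi\cdot u_\varepsilon\le \tfrac12\|\nabla u_\varepsilon\|_{L^2(\Omega)}^2+C\|n_\varepsilon\|_{L^{6/5}(\Omega)}^2$, the gradient part being absorbed by the $u_\varepsilon$-dissipation. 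At this stage everything is controlled except the two genuinely nonlinear contributions $\int_\Omega n_\varepsilon^2$ and $\|n_\varepsilon\|_{L^{6/5}(\Omega)}^2$.

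The main obstacle is precisely to absorb these two terms into the diffusion dissipation $\int_\Omega (n_\varepsilon+\varepsilon)^{2m-4}|\nabla n_\varepsilon|^2=\frac{1}{(m-1)^2}\|\nabla (n_\varepsilon+\varepsilon)^{m-1}\|_{L^2(\Omega)}^2$. Writing $w_\varepsilon:=(n_\varepsilon+\varepsilon)^{m-1}$ and recalling from Lemma \ref{fvfgfflemma45} that $\int_\Omega (n_\varepsilon+\varepsilon)=\int_\Omega w_\varepsilon^{1/(m-1)}$ stays bounded, I would apply the Gagliardo--Nirenberg inequality to $w_\varepsilon$ to estimate $\int_\Omega (n_\varepsilon+\varepsilon)^2=\|w_\varepsilon\|_{L^{2/(m-1)}(\Omega)}^{2/(m-1)}$; a short scaling computation shows that the relevant interpolation exponent $\theta$ satisfies $\theta\cdot\frac{2}{m-1}=\frac{6}{6m-7}<2$ whenever $m>2$, so Young's inequality converts the estimate into $\delta\,\|\nabla w_\varepsilon\|_{L^2(\Omega)}^2+C$ for arbitrarily small $\delta$. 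Since $\|n_\varepsilon\|_{L^{6/5}(\Omega)}\le \|n_\varepsilon\|_{L^1(\Omega)}^{2/3}\|n_\varepsilon\|_{L^2(\Omega)}^{1/3}$, the mass bound reduces $\|n_\varepsilon\|_{L^{6/5}(\Omega)}^2$ to $\int_\Omega n_\varepsilon^2$, which is absorbed the same way. The delicate point is that the natural lower-order quantity is $\|w_\varepsilon\|_{L^{1/(m-1)}}$ with exponent $1/(m-1)<1$, so I would invoke the Gagliardo--Nirenberg inequality in its form valid for such quasi-norms, as is customary in the porous-medium chemotaxis literature.

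Finally, after fixing $a,b,\delta$, an analogous Gagliardo--Nirenberg estimate bounds $\int_\Omega (n_\varepsilon+\varepsilon)^{m-1}$ itself by a small multiple of $\|\nabla w_\varepsilon\|_{L^2(\Omega)}^2$ plus a constant, Poincar\'e's inequality controls $\int_\Omega |u_\varepsilon|^2$ by $\int_\Omega|\nabla u_\varepsilon|^2$, and $\int_\Omega c_\varepsilon^2$ is already present on the left. Reserving fixed fractions of the three dissipation integrals, this yields a differential inequality of the form
\[
y_\varepsilon'(t)+\alpha\, y_\varepsilon(t)+\gamma\Big[\int_\Omega (n_\varepsilon+\varepsilon)^{2m-4}|\nabla n_\varepsilon|^2+\int_\Omega |\nabla c_\varepsilon|^2+\int_\Omega |\nabla u_\varepsilon|^2\Big]\le C
\]
with $\alpha,\gamma,C>0$ independent of $\varepsilon$. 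As $y_\varepsilon(0)$ is bounded uniformly in $\varepsilon\in(0,1)$ by \dref{ccvvx1.731426677gg}, an ODE comparison gives $y_\varepsilon(t)\le\max\{y_\varepsilon(0),C/\alpha\}$ for all $t\in(0,T_{max,\varepsilon})$, which is \dref{czfvgb2.5ghhjuyuccvviihjj}; integrating the same inequality over $(0,T)$ and discarding $\alpha y_\varepsilon$ yields $\gamma\int_0^T[\cdots]\le y_\varepsilon(0)+CT$, which is \dref{bnmbncz2.5ghhjuyuivvbnnihjj}.
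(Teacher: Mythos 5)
Your proposal is correct and follows essentially the same route as the paper's proof: testing with $(n_\varepsilon+\varepsilon)^{m-2}$, $c_\varepsilon$ and $u_\varepsilon$, cancelling the convective and Yosida terms by solenoidality, absorbing the cross-diffusion and forcing terms via Young, and crucially using the Gagliardo--Nirenberg interpolation of $\|n_\varepsilon\|_{L^2}^2$ through $w_\varepsilon=(n_\varepsilon+\varepsilon)^{m-1}$ against the mass bound, where $m>2$ makes the exponent $\frac{6}{6m-7}<2$ so the dissipation absorbs it. Your treatment is in fact slightly more complete than the paper's, since you spell out the quasi-norm form of Gagliardo--Nirenberg and the final Poincar\'e/ODE-comparison step that the paper compresses into ``we can conclude.''
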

\begin{proof}
Taking ${c_{\varepsilon}}$ as the test function for the second  equation of \dref{1.1fghyuisda} and using $\nabla\cdot u_\varepsilon=0$ and the Young inequality  yields  that
\begin{equation}
\begin{array}{rl}
\disp\frac{1}{{2}}\disp\frac{d}{dt}\|{c_{\varepsilon}}\|^{{{2}}}_{L^{{2}}(\Omega)}+
\int_{\Omega} |\nabla c_{\varepsilon}|^2+ \int_{\Omega} | c_{\varepsilon}|^2=&\disp{\int_{\Omega} n_{\varepsilon}c_{\varepsilon}}\\
\leq&\disp{\frac{1}{2}\int_{\Omega} n_{\varepsilon}^2+\frac{1}{2}\int_{\Omega}c_{\varepsilon}^2.}\\
\end{array}
\label{hhxxcdfvvjjcz2.5}
\end{equation}
On the other hand, 
due to
 the Gagliardo--Nirenberg inequality, \dref{ddfgczhhhh2.5ghju48cfg924ghyuji}, in light of  the Young inequality and $m>2$, we obtain that for any $\delta_1>0,$
\begin{equation}
\begin{array}{rl}
\disp\|n_{\varepsilon}\|_{L^{2}(\Omega)}^2\leq&\disp{\|n_{\varepsilon}+\varepsilon\|_{L^{2}(\Omega)}^2}\\
=&\disp{\|(n_{\varepsilon}+\varepsilon)^{m-1}\|_{L^{\frac{2}{m-1}}(\Omega)}^{\frac{2}{m-1}}}\\
\leq&\disp{C_1\|\nabla (n_{\varepsilon}+\varepsilon)^{m-1}\|_{L^{2}(\Omega)}^{\frac{6}{6m-7}}\| (n_{\varepsilon}+\varepsilon)^{m-1}\|_{L^{\frac{1}{m-1}}(\Omega)}^{\frac{2}{m-1}-\frac{6}{6m-7}}}\\
\leq&\disp{C_2(\|\nabla (n_{\varepsilon}+\varepsilon)^{m-1}\|_{L^{2}(\Omega)}^{\frac{6}{6m-7}}+1)}\\
\leq&\disp{\delta_1\|\nabla (n_{\varepsilon}+\varepsilon)^{m-1}\|_{L^{2}(\Omega)}^{2}+C_3~~\mbox{for all}~~ t\in(0, T_{max,\varepsilon})}\\
\end{array}
\label{5566ddddfgcz2.5ghju4cddfff8cfg924gjjkkkhyuji}
\end{equation}
with some positive constants  $C_1,C_2$ and $C_3$ independent of $\varepsilon$.
Hence, in light of \dref{hhxxcdfvvjjcz2.5} and \dref{5566ddddfgcz2.5ghju4cddfff8cfg924gjjkkkhyuji}, we derive that
\begin{equation}
\begin{array}{rl}
\disp\disp\frac{d}{dt}\|{c_{\varepsilon}}\|^{{{2}}}_{L^{{2}}(\Omega)}+2
\int_{\Omega} |\nabla c_{\varepsilon}|^2+ \int_{\Omega}  c_{\varepsilon}^2\leq&\disp{\delta_1\|\nabla (n_{\varepsilon}+\varepsilon)^{m-1}\|_{L^{2}(\Omega)}^{2}+C_3~~\mbox{for all}~~ t\in(0, T_{max,\varepsilon})}\\
\end{array}
\label{hhxxcdfvv55677dd55678fghhjjcz2.5}
\end{equation}
and some positive constant $C_{3}$ independent of $\varepsilon.$
Next, multiply the first equation in $\dref{1.1fghyuisda}$ by $({n_{\varepsilon}}+\varepsilon)^{m-2}$
 and combining with the second equation, using $\nabla\cdot u_\varepsilon=0$ and the Young inequality implies that
\begin{equation}
\begin{array}{rl}
&\disp{\frac{1}{{m-1}}\frac{d}{dt}\|{n_{\varepsilon}}+\varepsilon\|^{{{m-1}}}_{L^{{m-1}}(\Omega)}+
m(m-2)\int_{\Omega}({n_{\varepsilon}}+\varepsilon)^{2m-4} |\nabla n_{\varepsilon}|^2}\\
\leq&\disp{(m-2)\int_\Omega ({n_{\varepsilon}}+\varepsilon)^{m-2}|\nabla n_{\varepsilon}||\nabla c_{\varepsilon}| }\\
 \leq&\disp{\frac{m(m-2)}{2}\int_{\Omega}({n_{\varepsilon}}+\varepsilon)^{2m-4} |\nabla n_{\varepsilon}|^2+\frac{(m-2)}{2m}\int_\Omega |\nabla c_{\varepsilon}|^2 .}
\end{array}
\label{55hhjjcz2.5}
\end{equation}
Now, multiplying the
third equation of \dref{1.1fghyuisda} by $u_\varepsilon$, integrating by parts and using $\nabla\cdot u_{\varepsilon}=0$, we derive that
\begin{equation}
\frac{1}{2}\frac{d}{dt}\int_{\Omega}{|u_{\varepsilon}|^2}+\int_{\Omega}{|\nabla u_{\varepsilon}|^2}= \int_{\Omega}n_{\varepsilon}u_{\varepsilon}\cdot\nabla \phi~~\mbox{for all}~~ t\in(0, T_{max,\varepsilon}).
\label{55ddddfgcz2.5ghju48cfg924ghyuji}
\end{equation}
Here we use the H\"{o}lder inequality and \dref{dd1.1fghyuisdakkkllljjjkk}
and the continuity of the embedding $W^{1,2}(\Omega)\hookrightarrow L^6(\Omega)$ and
 to
find $C_4>0$ and $C_5 > 0$ such that
\begin{equation}
\begin{array}{rl}
\disp\int_{\Omega}n_{\varepsilon}u_{\varepsilon}\cdot\nabla \phi\leq&\disp{\|\nabla \phi\|_{L^\infty(\Omega)}\|n_{\varepsilon}\|_{L^{\frac{6}{5}}(\Omega)}\|\nabla u_{\varepsilon}\|_{L^{2}(\Omega)}}\\
\leq&\disp{C_4\|n_{\varepsilon}\|_{L^{\frac{6}{5}}(\Omega)}\| \nabla u_{\varepsilon}\|_{L^{2}(\Omega)}}\\
\leq&\disp{\frac{C_4^2}{2}\|n_{\varepsilon}\|_{L^{\frac{6}{5}}(\Omega)}^2+\frac{1}{2}\|\nabla u_{\varepsilon}\|_{L^{2}(\Omega)}^2}\\
\leq&\disp{C_5\|n_{\varepsilon}\|_{L^{2}(\Omega)}^2+\frac{1}{2}\|\nabla u_{\varepsilon}\|_{L^{2}(\Omega)}^2~~\mbox{for all}~~ t\in(0, T_{max,\varepsilon}),}\\
\end{array}
\label{55ddddfgcz2.5ghju48cfg924ghyuji}
\end{equation}
which together with \dref{5566ddddfgcz2.5ghju4cddfff8cfg924gjjkkkhyuji} implies that for any $\delta_2$
\begin{equation}
\begin{array}{rl}
\disp\int_{\Omega}n_{\varepsilon}u_{\varepsilon}\cdot\nabla \phi\leq&\disp{\frac{\delta_2}{2}\|\nabla (n_{\varepsilon}+\varepsilon)^{m-1}\|_{L^{2}(\Omega)}^{2}+\frac{1}{2}\|\nabla u_{\varepsilon}\|_{L^{2}(\Omega)}^2+C_6~~\mbox{for all}~~ t\in(0, T_{max,\varepsilon}),}\\
\end{array}
\label{2234555ddddfgcz2.5ghju48cfg924ghyuji}
\end{equation}
where $C_6$ is a positive constant independent of $\varepsilon.$
Inserting \dref{2234555ddddfgcz2.5ghju48cfg924ghyuji} into \dref{55ddddfgcz2.5ghju48cfg924ghyuji} and using the Young inequality and $m>2$, we conclude that there exists a positive constant $C_7$  such that
\begin{equation}
\frac{d}{dt}\int_{\Omega}{|u_{\varepsilon}|^2}+\int_{\Omega}{|\nabla u_{\varepsilon}|^2}\leq \delta_2\|\nabla (n_{\varepsilon}+\varepsilon)^{m-1}\|_{L^{2}(\Omega)}^{2}+C_7~~\mbox{for all}~~ t\in(0, T_{max,\varepsilon}).\\
\label{44555ddddf334799gcz2.5ghju48cfg924ghyuji}
\end{equation}
Take an evident linear combination of the inequalities provided by \dref{hhxxcdfvv55677dd55678fghhjjcz2.5}, \dref{55hhjjcz2.5} and \dref{44555ddddf334799gcz2.5ghju48cfg924ghyuji}, we conclude
\begin{equation}
 \begin{array}{rl}
 &\disp{\frac{d}{dt}\left(\|{c_{\varepsilon}}\|^{{{2}}}_{L^{{2}}(\Omega)}+
 \frac{L}{{m-1}}\|{n_{\varepsilon}}+\varepsilon\|^{{{m-1}}}_{L^{{m-1}}(\Omega)}+\int_{\Omega}|u_{\varepsilon}|^2\right)+\int_{\Omega}|\nabla u_{\varepsilon}|^2}\\
 &\disp{+(L\frac{m(m-2)}{(m-1)^2}-\delta_2-\delta_1)\|\nabla (n_{\varepsilon}+\varepsilon)^{m-1}\|_{L^{2}(\Omega)}^{2}
 +(2-L\frac{(m-2)}{2m})
\int_{\Omega} |\nabla c_{\varepsilon}|^2+ \int_{\Omega} c_{\varepsilon}^2}\\
 \leq&\disp{ C_8~~~\mbox{for all}~~t\in (0, T_{max,\varepsilon}),}\\
\end{array}\label{vgbc45678cvbbffeerfgghhjuuloollgghhhyhh}
\end{equation}
where $C_8$ and $L$ are positive constants.
Now, choosing $L=\frac{2m}{(m-2)}$ and $\delta_1=\delta_2=\frac{L}{4}\frac{m(m-2)}{(m-1)^2}$ 
 in \dref{vgbc45678cvbbffeerfgghhjuuloollgghhhyhh}, we can conclude that
\dref{czfvgb2.5ghhjuyuccvviihjj}
and \dref{bnmbncz2.5ghhjuyuivvbnnihjj}.
\end{proof}
With the help of Lemma \ref{lemmaghjssddgghhmk4563025xxhjklojjkkk}, in light of the Gagliardo--Nirenberg inequality and an application of well-known arguments
from parabolic regularity theory, we can derive the following Lemma:
\begin{lemma}\label{lemmddaghjssddgghhmk4563025xxhjklojjkkk}
Let $m>2$.
Then there exists $C>0$ independent of $\varepsilon$ such that the solution of \dref{1.1fghyuisda} satisfies
\begin{equation}
\begin{array}{rl}
&\disp{\int_{\Omega}   c_{\varepsilon}^{\frac{8(m-1)}{3}}\leq C~~~\mbox{for all}~~ t\in (0, T_{max,\varepsilon}).}\\
\end{array}
\label{czfvgb2.5ghffghjuyuccvviihjj}
\end{equation}
In addition,
for each $T\in(0, T_{max,\varepsilon})$, one can find a constant $C > 0$ independent of $\varepsilon$ such that
\begin{equation}
\begin{array}{rl}
&\disp{\int_{0}^T\int_{\Omega} \left[n_{\varepsilon}^{\frac{8(m-1)}{3}}+{c_{\varepsilon}^{\frac{8m-14}{3}}}|\nabla c_{\varepsilon}|^2+ c_{\varepsilon}^{\frac{40(m-1)}{9}}\right]\leq C.}\\
\end{array}
\label{bnmbncz2.ffghh5ghhjuyuivvbnnihjj}
\end{equation}
\end{lemma}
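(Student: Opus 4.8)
The plan is to turn the two conclusions of Lemma~\ref{lemmaghjssddgghhmk4563025xxhjklojjkkk} — the uniform bound \dref{czfvgb2.5ghhjuyuccvviihjj} and the space-time gradient bound \dref{bnmbncz2.5ghhjuyuivvbnnihjj} — into improved integrability of $n_\varepsilon$, and then to feed this into a testing argument for the $c_\varepsilon$-equation. Throughout I write $r:=\frac{8(m-1)}{3}$, noting $r>2$ since $m>2$. \textbf{Step 1 (space-time bound for $n_\varepsilon$).} Set $w_\varepsilon:=(n_\varepsilon+\varepsilon)^{m-1}$. Since $\nabla w_\varepsilon=(m-1)(n_\varepsilon+\varepsilon)^{m-2}\nabla n_\varepsilon$, the dissipation term in \dref{bnmbncz2.5ghhjuyuivvbnnihjj} equals $(m-1)^{-2}\|\nabla w_\varepsilon\|_{L^2(\Omega)}^2$, so $\int_0^T\|\nabla w_\varepsilon\|_{L^2(\Omega)}^2\le C$, while \dref{czfvgb2.5ghhjuyuccvviihjj} gives $\|w_\varepsilon\|_{L^1(\Omega)}\le C$ uniformly in $t$. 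I would invoke the three-dimensional Gagliardo--Nirenberg inequality in the form
\[
\|w_\varepsilon\|_{L^{8/3}(\Omega)}^{8/3}\le C\bigl(\|\nabla w_\varepsilon\|_{L^2(\Omega)}^{2}\|w_\varepsilon\|_{L^1(\Omega)}^{2/3}+\|w_\varepsilon\|_{L^1(\Omega)}^{8/3}\bigr),
\]
whose interpolation parameter is $\theta=\tfrac34$, and integrate in time. Because $n_\varepsilon^{r}\le w_\varepsilon^{8/3}$, this produces $\int_0^T\int_\Omega n_\varepsilon^{r}\le C$, the first term of \dref{bnmbncz2.ffghh5ghhjuyuivvbnnihjj}.

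\textbf{Step 2 (uniform $L^r$-bound and gradient dissipation for $c_\varepsilon$).} I would test the second equation of \dref{1.1fghyuisda} with $c_\varepsilon^{r-1}$. As $\nabla\cdot u_\varepsilon=0$ and $u_\varepsilon=0$ on $\partial\Omega$, the convective term vanishes, leaving
\[
\frac1r\frac{d}{dt}\int_\Omega c_\varepsilon^{r}+(r-1)\int_\Omega c_\varepsilon^{r-2}|\nabla c_\varepsilon|^2+\int_\Omega c_\varepsilon^{r}=\int_\Omega n_\varepsilon c_\varepsilon^{r-1}.
\]
Young's inequality bounds the right-hand side by $\frac1r\int_\Omega n_\varepsilon^{r}+\frac{r-1}{r}\int_\Omega c_\varepsilon^{r}$, so after multiplying by $r$,
\[
\frac{d}{dt}\int_\Omega c_\varepsilon^{r}+r(r-1)\int_\Omega c_\varepsilon^{r-2}|\nabla c_\varepsilon|^2+\int_\Omega c_\varepsilon^{r}\le\int_\Omega n_\varepsilon^{r}.
\]
Writing $y(t):=\int_\Omega c_\varepsilon^{r}$ and $g(t):=\int_\Omega n_\varepsilon^{r}$, the comparison $y'+y\le g$ together with $\int_0^T g\le C$ from Step~1 and $y(0)=\int_\Omega c_0^{r}\le C$ yields $y(t)\le y(0)+\int_0^t e^{-(t-s)}g(s)\,ds\le C$ uniformly in $t$, which is \dref{czfvgb2.5ghffghjuyuccvviihjj}. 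Integrating the same differential inequality over $(0,T)$ then gives $\int_0^T\int_\Omega c_\varepsilon^{r-2}|\nabla c_\varepsilon|^2\le C$; since $r-2=\frac{8m-14}{3}$, this is the second term of \dref{bnmbncz2.ffghh5ghhjuyuivvbnnihjj}.

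\textbf{Step 3 (higher space-time bound for $c_\varepsilon$).} Put $v_\varepsilon:=c_\varepsilon^{r/2}$. Then $\|v_\varepsilon\|_{L^2(\Omega)}^2=\int_\Omega c_\varepsilon^{r}\le C$ by Step~2, while $\|\nabla v_\varepsilon\|_{L^2(\Omega)}^2=\frac{r^2}{4}\int_\Omega c_\varepsilon^{r-2}|\nabla c_\varepsilon|^2$, so $\int_0^T\|\nabla v_\varepsilon\|_{L^2(\Omega)}^2\le C$. The three-dimensional Gagliardo--Nirenberg inequality
\[
\|v_\varepsilon\|_{L^{10/3}(\Omega)}^{10/3}\le C\bigl(\|\nabla v_\varepsilon\|_{L^2(\Omega)}^{2}\|v_\varepsilon\|_{L^2(\Omega)}^{4/3}+\|v_\varepsilon\|_{L^2(\Omega)}^{10/3}\bigr)
\]
($\theta=\tfrac35$ here), integrated in $t$, yields $\int_0^T\int_\Omega c_\varepsilon^{5r/3}\le C$; as $5r/3=\frac{40(m-1)}{9}$, this is the last term of \dref{bnmbncz2.ffghh5ghhjuyuivvbnnihjj}.

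The only genuinely delicate point is the bookkeeping of the Gagliardo--Nirenberg exponents in Steps~1 and~3: one must verify that the interpolation parameters are exactly $\theta=\tfrac34$ (for $L^{8/3}$ from $L^1$ and $\dot H^1$) and $\theta=\tfrac35$ (for $L^{10/3}$ from $L^2$ and $\dot H^1$), so that the power of $\|\nabla\cdot\|_{L^2(\Omega)}$ comes out to precisely $2$ and is therefore integrable in time against the bounds of Lemma~\ref{lemmaghjssddgghhmk4563025xxhjklojjkkk}. Everything else is routine; in particular, no control of $u_\varepsilon$ beyond $\nabla\cdot u_\varepsilon=0$ and the no-slip condition is required, since the fluid coupling disappears from the $c_\varepsilon$-testing identity.
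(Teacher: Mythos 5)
Your proposal is correct and takes essentially the same route as the paper's own proof: a Gagliardo--Nirenberg interpolation of $(n_\varepsilon+\varepsilon)^{m-1}$ between $L^1$ and the dissipation bound to obtain the space--time $L^{\frac{8(m-1)}{3}}$ estimate for $n_\varepsilon$, then testing the $c_\varepsilon$-equation with $c_\varepsilon^{\frac{8m-11}{3}}=c_\varepsilon^{r-1}$ plus Young's inequality, and finally a second Gagliardo--Nirenberg step applied to $c_\varepsilon^{r/2}=c_\varepsilon^{\frac{4(m-1)}{3}}$, with exactly the interpolation exponents ($\theta=\tfrac34$ and $\theta=\tfrac35$) that the paper uses. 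The only difference is cosmetic: you make explicit the ODE comparison step that the paper asserts without detail.
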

\begin{proof}
Firstly, due to \dref{czfvgb2.5ghhjuyuccvviihjj}
and \dref{bnmbncz2.5ghhjuyuivvbnnihjj}, in light of the Gagliardo--Nirenberg inequality, for some $C_1$ and $C_2> 0$ which are independent of $\varepsilon$, we derive that
\begin{equation}
\begin{array}{rl}
\disp\int_{0}^T\disp\int_{\Omega}(n_{\varepsilon}+\varepsilon)^{\frac{8(m-1)}{3}} =&\disp{\int_{0}^T\| {(n_{\varepsilon}+\varepsilon)^{m-1}}\|^{{\frac{8}{3}}}_{L^{\frac{8}{3}}(\Omega)}}\\
\leq&\disp{C_{1}\int_{0}^T\left(\| \nabla{(n_{\varepsilon}+\varepsilon)^{m-1}}\|^{2}_{L^{2}(\Omega)}\|{(n_{\varepsilon}+\varepsilon)^{m-1}}\|^{{\frac{2}{3}}}_{L^{1}(\Omega)}+
\|{(n_{\varepsilon}+\varepsilon)^{m-1}}\|^{{\frac{8}{3}}}_{L^{1}(\Omega)}\right)}\\
\leq&\disp{C_{2}(T+1)~~\mbox{for all}~~ T > 0.}\\
\end{array}
\label{ddffbnmbnddfgcz2ddfvgbhh.htt678ddfghhhyuiihjj}
\end{equation}
Next, taking  ${c_{\varepsilon}^{\frac{8m-11}{3}}}$ as the test function for the second  equation of \dref{1.1fghyuisda} and using $\nabla\cdot u_\varepsilon=0$ and the Young inequality  yields  that
\begin{equation}
\begin{array}{rl}
&\disp\frac{3}{8(m-1)}\disp\frac{d}{dt}\|{c_{\varepsilon}}\|^{{{\frac{8(m-1)}{3}}}}_{L^{{\frac{8(m-1)}{3}}}(\Omega)}+\frac{8m-11}{3}
\int_{\Omega} {c_{\varepsilon}^{\frac{8m-14}{3}}}|\nabla c_{\varepsilon}|^2+ \int_{\Omega} c_{\varepsilon}^{\frac{8(m-1)}{3}}\\
=&\disp{\int_{\Omega} n_{\varepsilon}c_{\varepsilon}^{\frac{8m-11}{3}}}\\
\leq&\disp{C_{3}\int_{\Omega} n_{\varepsilon}^{\frac{8(m-1)}{3}}+\frac{1}{2}\int_{\Omega}c_{\varepsilon}^{\frac{8(m-1)}{3}}~~~\mbox{for all}~~t\in (0, T_{max,\varepsilon})}\\
\end{array}
\label{hhxxcdfssxxdccffgghvvjjcz2.5}
\end{equation}
with  some positive constant $C_{3}.$
Hence, due to \dref{ddffbnmbnddfgcz2ddfvgbhh.htt678ddfghhhyuiihjj} and \dref{hhxxcdfssxxdccffgghvvjjcz2.5}, we can find $C_{4} > 0$ such that
\begin{equation}
\begin{array}{rl}
&\disp{\int_{\Omega}   c_{\varepsilon}^{{{\frac{8(m-1)}{3}}}}\leq C_{4}~~~\mbox{for all}~~ t\in (0, T_{max,\varepsilon})}\\
\end{array}
\label{czfvgb2.5ghhffghhjjjuyuccvviihjj}
\end{equation}
and
\begin{equation}
\begin{array}{rl}
&\disp{\int_{0}^T\int_{\Omega}  {c_{\varepsilon}^{\frac{8m-14}{3}}}|\nabla c_{\varepsilon}|^2\leq C_{4}(T+1)~~~\mbox{for all}~~T\in(0, T_{max,\varepsilon}).}\\
\end{array}
\label{bnmbnczddffhhjj2.5ghhjuyuivvbnnihjj}
\end{equation}
Now, due to \dref{czfvgb2.5ghhffghhjjjuyuccvviihjj}
and \dref{bnmbnczddffhhjj2.5ghhjuyuivvbnnihjj}, in light of the Gagliardo--Nirenberg inequality, we derive that there exist positive constants $C_{5}$ and $C_{6}$ such that
\begin{equation}
\begin{array}{rl}
\disp\int_{0}^T\disp\int_{\Omega} c_{\varepsilon}^{\frac{40(m-1)}{9}} =&\disp{\int_{0}^T\| { c_{\varepsilon}^{\frac{4(m-1)}{3}}}\|^{{\frac{10}{3}}}_{L^{\frac{10}{3}}(\Omega)}}\\
\leq&\disp{C_{5}\int_{0}^T\left(\| \nabla{ c_{\varepsilon}^{\frac{4(m-1)}{3}}}\|^{2}_{L^{2}(\Omega)}\|{ c_{\varepsilon}^{\frac{4(m-1)}{3}}}\|^{{\frac{4}{3}}}_{L^{2}(\Omega)}+
\|{ c_{\varepsilon}^{\frac{4(m-1)}{3}}}\|^{{\frac{10}{3}}}_{L^{2}(\Omega)}\right)}\\
\leq&\disp{C_{6}(T+1)~~\mbox{for all}~~ T > 0.}\\
\end{array}
\label{ddffbnmbnddfgffggjjkkuuiicz2ddfvgbhh.htt678ddfghhhyuiihjj}
\end{equation}
Finally, collecting \dref{ddffbnmbnddfgcz2ddfvgbhh.htt678ddfghhhyuiihjj},
and \dref{czfvgb2.5ghhffghhjjjuyuccvviihjj}--\dref{ddffbnmbnddfgffggjjkkuuiicz2ddfvgbhh.htt678ddfghhhyuiihjj}, we can get the results.
\end{proof}

\begin{lemma}\label{lemma630jklhhjj}
There  exists a positive constant $C:=C(\varepsilon)$ depends on $\varepsilon$ such that
\begin{equation}
\int_{\Omega}{|\nabla u_{\varepsilon}(\cdot,t)|^2}\leq C~~\mbox{for all}~~ t\in(0, T_{max,\varepsilon})
\label{ddxcvbbggddfgcz2vv.5ghju48cfg924ghyuji}
\end{equation}
and
\begin{equation}
\int_0^{T}\int_{\Omega}{|\Delta u_{\varepsilon}|^2}\leq C~~\mbox{for all}~~ T\in(0, T_{max,\varepsilon}).
\label{gghhddddcz2.5ghju48cfg924}
\end{equation}
\end{lemma}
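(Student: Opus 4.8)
The plan is to establish an $H^2$-regularity inequality for the fluid component by testing the Leray-projected velocity equation against the Stokes operator applied to $u_\varepsilon$. Writing $\P$ for the Helmholtz projection and $A=-\P\Delta$ for the Stokes operator under the no-slip condition, I would apply $\P$ to the third equation of \dref{1.1fghyuisda} and multiply by $Au_\varepsilon$ in $L^2(\Omega)$. The pressure gradient is annihilated by $\P$, the viscous term yields $\int_\Omega|Au_\varepsilon|^2$ (since the non-solenoidal part of $\Delta u_\varepsilon$ is $L^2$-orthogonal to $Au_\varepsilon\in L^2_\sigma(\Omega)$), and the identity $\|A^{1/2}u_\varepsilon\|_{L^2(\Omega)}^2=\|\nabla u_\varepsilon\|_{L^2(\Omega)}^2$ recasts the time-derivative term. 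This produces
\[
\frac12\frac{d}{dt}\int_\Omega|\nabla u_\varepsilon|^2+\int_\Omega|Au_\varepsilon|^2 = -\kappa\int_\Omega(Y_\varepsilon u_\varepsilon\cdot\nabla)u_\varepsilon\cdot Au_\varepsilon+\int_\Omega n_\varepsilon\nabla\phi\cdot Au_\varepsilon,
\]
and the whole task reduces to absorbing the two terms on the right into $\int_\Omega|Au_\varepsilon|^2$.

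The forcing term is routine: by H\"older's inequality and \dref{dd1.1fghyuisdakkkllljjjkk},
\[
\int_\Omega n_\varepsilon\nabla\phi\cdot Au_\varepsilon\le\|\nabla\phi\|_{L^\infty(\Omega)}\|n_\varepsilon\|_{L^2(\Omega)}\|Au_\varepsilon\|_{L^2(\Omega)}\le\frac14\int_\Omega|Au_\varepsilon|^2+C\|n_\varepsilon\|_{L^2(\Omega)}^2,
\]
and the factor $\|n_\varepsilon\|_{L^2(\Omega)}^2$ will be controlled in a space-time averaged sense below.

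The main obstacle is the convective term, and this is precisely where the Yosida regularization \dref{aasddffgg1.1fghyuisda} is indispensable. Since $Y_\varepsilon u_\varepsilon=(1+\varepsilon A)^{-1}u_\varepsilon$, the spectral estimate $\|A^\gamma(1+\varepsilon A)^{-1}\|\le C\varepsilon^{-\gamma}$ together with the three-dimensional embedding $D(A^\gamma)\hookrightarrow L^\infty(\Omega)$ for $\gamma\in(\tfrac34,1)$ gives $\|Y_\varepsilon u_\varepsilon\|_{L^\infty(\Omega)}\le C(\varepsilon)\|u_\varepsilon\|_{L^2(\Omega)}$, which by the uniform bound \dref{czfvgb2.5ghhjuyuccvviihjj} is bounded by a constant depending only on $\varepsilon$. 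Consequently
\[
\Big|\kappa\int_\Omega(Y_\varepsilon u_\varepsilon\cdot\nabla)u_\varepsilon\cdot Au_\varepsilon\Big|\le|\kappa|\,\|Y_\varepsilon u_\varepsilon\|_{L^\infty(\Omega)}\|\nabla u_\varepsilon\|_{L^2(\Omega)}\|Au_\varepsilon\|_{L^2(\Omega)}\le\frac14\int_\Omega|Au_\varepsilon|^2+C(\varepsilon)\int_\Omega|\nabla u_\varepsilon|^2.
\]
Without this smoothing the cubic quantity would have to be estimated by Gagliardo--Nirenberg interpolation, which does not close against a single power of $\|Au_\varepsilon\|_{L^2(\Omega)}$ in three dimensions; the regularization buys the $L^\infty$ control at the cost of an $\varepsilon$-dependent constant, which is all that Lemma \ref{lemma630jklhhjj} asks for.

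Inserting both estimates gives the differential inequality
\[
\frac{d}{dt}\int_\Omega|\nabla u_\varepsilon|^2+\int_\Omega|Au_\varepsilon|^2\le C(\varepsilon)\int_\Omega|\nabla u_\varepsilon|^2+C\|n_\varepsilon\|_{L^2(\Omega)}^2 .
\]
To close it I would use $m>2$, which forces $\frac{8(m-1)}{3}>2$, so that \dref{bnmbncz2.ffghh5ghhjuyuivvbnnihjj} together with H\"older's inequality in space and time yields $\int_0^T\|n_\varepsilon\|_{L^2(\Omega)}^2\le C(T+1)$, while \dref{bnmbncz2.5ghhjuyuivvbnnihjj} supplies $\int_0^T\|\nabla u_\varepsilon\|_{L^2(\Omega)}^2\le C$. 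An application of Gronwall's lemma then yields \dref{ddxcvbbggddfgcz2vv.5ghju48cfg924ghyuji}; integrating the differential inequality in time afterwards, and using the equivalence of $\|Au_\varepsilon\|_{L^2(\Omega)}$ with the $W^{2,2}(\Omega)$-norm on $D(A)$ to replace $Au_\varepsilon$ by $\Delta u_\varepsilon$, delivers the space-time bound \dref{gghhddddcz2.5ghju48cfg924}.
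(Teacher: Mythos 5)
Your proposal is correct and follows essentially the same route as the paper: testing the projected momentum equation against $Au_\varepsilon$, exploiting the $\varepsilon$-dependent bound $\|Y_\varepsilon u_\varepsilon\|_{L^\infty(\Omega)}\le C(\varepsilon)$ (the paper obtains it via $D(1+\varepsilon A)\hookrightarrow L^\infty(\Omega)$ rather than your fractional-power estimate, but the idea is identical) together with the uniform $L^2$ bound \dref{czfvgb2.5ghhjuyuccvviihjj}, and then closing the differential inequality with the space-time bounds on $\int_0^T\int_\Omega n_\varepsilon^2$ and $\int_0^T\int_\Omega|\nabla u_\varepsilon|^2$ coming from \dref{ddffbnmbnddfgcz2ddfvgbhh.htt678ddfghhhyuiihjj} and \dref{bnmbncz2.5ghhjuyuivvbnnihjj}, plus the equivalence of $\|A\cdot\|_{L^2(\Omega)}$ with the $W^{2,2}(\Omega)$-norm on $D(A)$.
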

\begin{proof}
Firstly, due to  $D(1 + \varepsilon A)  :=W^{2,2}(\Omega) \cap W_{0,\sigma}^{1,2}(\Omega)\hookrightarrow L^\infty(\Omega),$ by \dref{czfvgb2.5ghhjuyuccvviihjj}, we derive that for some  $C_1> 0$ and $C_2 > 0$,
\begin{equation}
\|Y_{\varepsilon}u_{\varepsilon}\|_{L^\infty(\Omega)}=\|(I+\varepsilon A)^{-1}u_{\varepsilon}\|_{L^\infty(\Omega)}\leq C_1\|u_{\varepsilon}(\cdot,t)\|_{L^2(\Omega)}\leq C_2~~\mbox{for all}~~t\in(0,T_{max,\varepsilon}).
\label{ssdcfvgdhhjjdfghgghjjnnhhkklld911cz2.5ghju48}
\end{equation}
Next, let $A = -\mathscr{2}\Delta$; testing the third equation by $Au_{\varepsilon}$ implies
\begin{equation}
\begin{array}{rl}
&\disp{\frac{1}{{2}}\frac{d}{dt}\|A^{\frac{1}{2}}u_{\varepsilon}\|^{{{2}}}_{L^{{2}}(\Omega)}+
\int_{\Omega}|Au_{\varepsilon}|^2 }\\
=&\disp{ \int_{\Omega}Au_{\varepsilon}\kappa
(Y_{\varepsilon}u_{\varepsilon} \cdot \nabla)u_{\varepsilon}+ \int_{\Omega}n_{\varepsilon}\nabla\phi Au_{\varepsilon}}\\
\leq&\disp{ \frac{1}{2}\int_{\Omega}|Au_{\varepsilon}|^2+\kappa^2\int_{\Omega}
|(Y_{\varepsilon}u_{\varepsilon} \cdot \nabla)u_{\varepsilon}|^2+ \|\nabla\phi\|^2_{L^\infty(\Omega)}\int_{\Omega}n_{\varepsilon}^2~~\mbox{for all}~~t\in(0,T_{max,\varepsilon}).}\\
\end{array}
\label{ddfghgghjjnnhhkklld911cz2.5ghju48}
\end{equation}

On the other hand, in light of the Gagliardo--Nirenberg inequality, the Young inequality and \dref{ssdcfvgdhhjjdfghgghjjnnhhkklld911cz2.5ghju48}, there exists a positive constant $C_3$
such that
\begin{equation}
\begin{array}{rl}
\kappa^2\disp\int_{\Omega}
|(Y_{\varepsilon}u_{\varepsilon} \cdot \nabla)u_{\varepsilon}|^2\leq&\disp{ \kappa^2\|Y_{\varepsilon}u_{\varepsilon}\|^2_{L^\infty(\Omega)}\int_{\Omega}|\nabla u_{\varepsilon}|^2}\\
\leq&\disp{ \kappa^2\|Y_{\varepsilon}u_{\varepsilon}\|^2_{L^\infty(\Omega)}\int_{\Omega}|\nabla u_{\varepsilon}|^2}\\
\leq&\disp{ C_3\int_{\Omega}|\nabla u_{\varepsilon}|^2~~\mbox{for all}~~t\in(0,T_{max,\varepsilon}).}\\
\end{array}
\label{ssdcfvgddfghgghjjnnhhkklld911cz2.5ghju48}
\end{equation}
Here we have the well-known fact that $\|A(\cdot)\|_{L^{2}(\Omega)}$ defines a norm
equivalent to $\|\cdot\|_{W^{2,2}(\Omega)}$ on $D(A)$ (see Theorem 2.1.1 of \cite{Sohr}).
Therefore, recalling that $A = -\mathscr{2}\Delta$ and hence
$$\|A^{\frac{1}{2}}u_{\varepsilon}\|^{{{2}}}_{L^{{2}}(\Omega)} = \|\nabla u_{\varepsilon}\|^{{{2}}}_{L^{{2}}(\Omega)},$$
inserting the above equation  and \dref{ssdcfvgddfghgghjjnnhhkklld911cz2.5ghju48} into \dref{ddfghgghjjnnhhkklld911cz2.5ghju48}, we can conclude that
\begin{equation}
\begin{array}{rl}
&\disp{\frac{1}{{2}}\frac{d}{dt}\|\nabla u_{\varepsilon}\|^{{{2}}}_{L^{{2}}(\Omega)}+
\int_{\Omega}|\Delta u_{\varepsilon}|^2 \leq C_4\int_{\Omega}|\nabla u_{\varepsilon}|^2+ \|\nabla\phi\|^2_{L^\infty(\Omega)}\int_{\Omega}
n_{\varepsilon}^2~~\mbox{for all}~~t\in(0,T_{max,\varepsilon})}\\
\end{array}
\label{ddfgghhddfghgghjjnnhhkklld911cz2.5ghju48}
\end{equation}
with some positive constant $C_4.$
Collecting \dref{ddffbnmbnddfgcz2ddfvgbhh.htt678ddfghhhyuiihjj} and \dref{ddfgghhddfghgghjjnnhhkklld911cz2.5ghju48} and applying the Young inequality, we can get
 the results.
\end{proof}

\begin{lemma}\label{xccffgghhlemma4563025xxhjkloghyui}
There exists $C:=C(\varepsilon)> 0$ depends on $\varepsilon$ such that
\begin{equation}
\int_{\Omega}{|\nabla c_{\varepsilon}(\cdot,t)|^2}\leq C~~\mbox{for all}~~ t\in(0, T_{max,\varepsilon})
\label{ddxxxcvvddcvddffbbggddfgcz2vv.5ghju48cfg924ghyuji}
\end{equation}
and
\begin{equation}
\int_0^{T}\int_{\Omega}{|\Delta c_{\varepsilon}|^2}\leq C~~\mbox{for all}~~ T\in(0, T_{max,\varepsilon}).
\label{gddffghhddddcz2.gghh5ghju48cfg924}
\end{equation}
\end{lemma}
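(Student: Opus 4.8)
The plan is to mirror the approach of Lemma \ref{lemma630jklhhjj}, replacing the test function $Au_\varepsilon$ by $-\Delta c_\varepsilon$. First I would test the second equation of \dref{1.1fghyuisda} with $-\Delta c_\varepsilon$ and integrate by parts, using the Neumann condition $\nabla c_\varepsilon\cdot\nu=0$ together with $\nabla\cdot u_\varepsilon=0$. Recalling that $\int_\Omega c_{\varepsilon t}(-\Delta c_\varepsilon)=\frac12\frac{d}{dt}\|\nabla c_\varepsilon\|_{L^2(\Omega)}^2$ and $\int_\Omega c_\varepsilon\Delta c_\varepsilon=-\|\nabla c_\varepsilon\|_{L^2(\Omega)}^2$, this produces
\[ \frac12\frac{d}{dt}\|\nabla c_\varepsilon\|_{L^2(\Omega)}^2+\|\Delta c_\varepsilon\|_{L^2(\Omega)}^2+\|\nabla c_\varepsilon\|_{L^2(\Omega)}^2=-\int_\Omega n_\varepsilon\Delta c_\varepsilon+\int_\Omega(u_\varepsilon\cdot\nabla c_\varepsilon)\Delta c_\varepsilon, \]
so that everything reduces to absorbing the two terms on the right into $\|\Delta c_\varepsilon\|_{L^2(\Omega)}^2$ up to lower-order contributions.

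For the production term, Young's inequality gives $-\int_\Omega n_\varepsilon\Delta c_\varepsilon\le\frac14\|\Delta c_\varepsilon\|_{L^2(\Omega)}^2+\|n_\varepsilon\|_{L^2(\Omega)}^2$, and since $m>2$ forces $\frac{8(m-1)}{3}>2$, the space--time bound \dref{bnmbncz2.ffghh5ghhjuyuivvbnnihjj} on $\int_0^T\int_\Omega n_\varepsilon^{8(m-1)/3}$ together with H\"older's inequality controls $\int_0^T\|n_\varepsilon\|_{L^2(\Omega)}^2$ by $C(T+1)$. The main obstacle is the convective term, which I would estimate by H\"older as
\[ \int_\Omega(u_\varepsilon\cdot\nabla c_\varepsilon)\Delta c_\varepsilon\le\|u_\varepsilon\|_{L^6(\Omega)}\|\nabla c_\varepsilon\|_{L^3(\Omega)}\|\Delta c_\varepsilon\|_{L^2(\Omega)}, \]
bounding $\|u_\varepsilon\|_{L^6(\Omega)}\le C(\varepsilon)$ via the embedding $W^{1,2}(\Omega)\hookrightarrow L^6(\Omega)$ and the pointwise-in-time gradient estimate \dref{ddxcvbbggddfgcz2vv.5ghju48cfg924ghyuji} from Lemma \ref{lemma630jklhhjj}. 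The factor $\|\nabla c_\varepsilon\|_{L^3(\Omega)}$ I would interpolate by the Gagliardo--Nirenberg inequality, $\|\nabla c_\varepsilon\|_{L^3(\Omega)}\le C\|\nabla^2 c_\varepsilon\|_{L^2(\Omega)}^{1/2}\|\nabla c_\varepsilon\|_{L^2(\Omega)}^{1/2}+C\|\nabla c_\varepsilon\|_{L^2(\Omega)}$, and then convert $\|\nabla^2 c_\varepsilon\|_{L^2(\Omega)}$ into $\|\Delta c_\varepsilon\|_{L^2(\Omega)}+\|\nabla c_\varepsilon\|_{L^2(\Omega)}$ through elliptic regularity under Neumann boundary conditions. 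The resulting leading term $\|\Delta c_\varepsilon\|_{L^2(\Omega)}^{3/2}\|\nabla c_\varepsilon\|_{L^2(\Omega)}^{1/2}$ is then handled by Young's inequality with exponents $\frac43$ and $4$, which, after choosing the absorption constant small, yields $\int_\Omega(u_\varepsilon\cdot\nabla c_\varepsilon)\Delta c_\varepsilon\le\frac14\|\Delta c_\varepsilon\|_{L^2(\Omega)}^2+C(\varepsilon)\|\nabla c_\varepsilon\|_{L^2(\Omega)}^2$.

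Inserting both bounds into the identity and absorbing the two $\frac14\|\Delta c_\varepsilon\|_{L^2(\Omega)}^2$ contributions, I would arrive at
\[ \frac{d}{dt}\|\nabla c_\varepsilon\|_{L^2(\Omega)}^2+\|\Delta c_\varepsilon\|_{L^2(\Omega)}^2\le C(\varepsilon)\|\nabla c_\varepsilon\|_{L^2(\Omega)}^2+2\|n_\varepsilon\|_{L^2(\Omega)}^2. \]
Since $t\mapsto\|n_\varepsilon(\cdot,t)\|_{L^2(\Omega)}^2$ is integrable on each finite subinterval of $(0,T_{max,\varepsilon})$, an application of Gr\"onwall's lemma gives the pointwise-in-time estimate \dref{ddxxxcvvddcvddffbbggddfgcz2vv.5ghju48cfg924ghyuji}, with a constant allowed to depend on $\varepsilon$ (and, through the factor $e^{C(\varepsilon)t}$, on the length of the interval); integrating the differential inequality once more over $(0,T)$ and discarding the now-controlled right-hand side then yields the space--time bound \dref{gddffghhddddcz2.gghh5ghju48cfg924}. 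I expect the convective term to be the only genuinely delicate point, the remaining steps being the routine combination of interpolation, elliptic regularity and Young's inequality already used throughout Section 3.
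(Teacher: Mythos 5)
Your proposal is correct in substance and shares the paper's skeleton (test the second equation with $-\Delta c_\varepsilon$, absorb $-\int_\Omega n_\varepsilon\Delta c_\varepsilon$ by Young and the space--time bound on $n_\varepsilon$ coming from \dref{ddffbnmbnddfgcz2ddfvgbhh.htt678ddfghhhyuiihjj}), but it handles the convective term and the closing step differently. The paper estimates $\int_\Omega|u_\varepsilon|^2|\nabla c_\varepsilon|^2\le\|u_\varepsilon\|_{L^8(\Omega)}^2\|\nabla c_\varepsilon\|_{L^{8/3}(\Omega)}^2$, interpolates $\|\nabla c_\varepsilon\|_{L^{8/3}(\Omega)}$ between $\|\Delta c_\varepsilon\|_{L^2(\Omega)}$ and the uniformly bounded $\|c_\varepsilon\|_{L^2(\Omega)}$ from \dref{czfvgb2.5ghhjuyuccvviihjj}, and then, after absorbing, controls the leftover $\|u_\varepsilon\|_{L^8(\Omega)}^{32/5}$ through $\|Au_\varepsilon\|_{L^2(\Omega)}^2$, whose \emph{time integral} is finite by the second (space--time) part \dref{gghhddddcz2.5ghju48cfg924} of Lemma \ref{lemma630jklhhjj}; the forcing in the resulting differential inequality is therefore integrable in time and the conclusion follows by direct integration, with at most linear growth $C(\varepsilon)(T+1)$. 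You instead use only the pointwise-in-time part \dref{ddxcvbbggddfgcz2vv.5ghju48cfg924ghyuji} of Lemma \ref{lemma630jklhhjj} (via $\|u_\varepsilon\|_{L^6(\Omega)}\le C(\varepsilon)$), interpolate $\|\nabla c_\varepsilon\|_{L^3(\Omega)}$ against $\|D^2c_\varepsilon\|_{L^2(\Omega)}$ with Neumann elliptic regularity, and close with Gr\"onwall, which yields a factor $e^{C(\varepsilon)t}$. The trade-off: your argument is more self-contained (it does not need the $\Delta u_\varepsilon$ space--time estimate nor the $L^2$ bound on $c_\varepsilon$), but the constant it produces grows exponentially in time rather than linearly. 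Strictly speaking, neither your bound nor the paper's integration yields a genuinely time-uniform constant when $T_{max,\varepsilon}=\infty$; since the lemma's only use is inside the contradiction argument of Lemma \ref{kkklemmaghjmk4563025xxhjklojjkkk}, where $T_{max,\varepsilon}$ is assumed finite, both versions are adequate for the purpose, and your explicit acknowledgment of the $e^{C(\varepsilon)t}$ dependence is the honest way to state what your route delivers.
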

\begin{proof}
Firstly,  testing the second equation in \dref{1.1fghyuisda} against $-\Delta c_{\varepsilon}$  and employing the Young inequality yields
\begin{equation}
\begin{array}{rl}
\disp{\frac{1}{{2}}\frac{d}{dt} \|\nabla c_{\varepsilon}\|^{{{2}}}_{L^{{2}}(\Omega)}}= &\disp{\int_{\Omega}  -\Delta c_{\varepsilon}(\Delta c_{\varepsilon}-c_{\varepsilon}+n_{\varepsilon}-u_{\varepsilon}\cdot\nabla  c_{\varepsilon})}
\\
=&\disp{-\int_{\Omega}  |\Delta c_{\varepsilon}|^2-\int_{\Omega} |\nabla c_{\varepsilon}|^{2}-\int_\Omega n_{\varepsilon}\Delta c_{\varepsilon}-\int_\Omega (u_{\varepsilon}\cdot\nabla  c_{\varepsilon})\Delta c_{\varepsilon}}\\
\leq&\disp{-\frac{1}{2}\int_{\Omega}  |\Delta c_{\varepsilon}|^2-\int_{\Omega} |\nabla c_{\varepsilon}|^{2}+\int_\Omega n_{\varepsilon}^2+\int_\Omega |u_{\varepsilon}|^2|\nabla  c_{\varepsilon}|^2}\\
\end{array}
\label{cz2.5ghju48156}
\end{equation}
for all $t\in(0,T_{max,\varepsilon})$.
Now, employing  \dref{czfvgb2.5ghhjuyuccvviihjj} and \dref{gghhddddcz2.5ghju48cfg924},  the Gagliardo--Nirenberg inequality and the Young inequality, we derive there exist positive constants
$C_1,C_2$ and $C_3$ such that
 \begin{equation}
\begin{array}{rl}
\disp{\int_\Omega |u_{\varepsilon}|^2|\nabla  c_{\varepsilon}|^2}
=&\disp{\|u_{\varepsilon}\|^{2}_{L^{8}(\Omega)}\|\nabla c_{\varepsilon}\|^{2}_{L^{\frac{8}{3}}(\Omega)}}\\
\leq&\disp{\|u_{\varepsilon}\|^{2}_{L^{8}(\Omega)}C_1(\|\Delta c_{\varepsilon}\|^{\frac{11}{8}}_{L^2(\Omega)}\| c_{\varepsilon}\|^{\frac{5}{8}}_{L^2(\Omega)}+\| c_{\varepsilon}\|^{2}_{L^2(\Omega)})}\\
\leq&\disp{\|u_{\varepsilon}\|^{2}_{L^{8}(\Omega)}C_2(\|\Delta c_{\varepsilon}\|^{\frac{11}{8}}_{L^2(\Omega)}+1)}\\
\leq&\disp{\frac{1}{4}\|\Delta c_{\varepsilon}\|^{2}_{L^2(\Omega)}+C_3(\|u_{\varepsilon}\|^{\frac{32}{5}}_{L^{8}(\Omega)}+1)}\\
\end{array}
\label{dd11cfvggcz2.5ghju48156}
\end{equation}
for all $t\in(0, T_{max,\varepsilon})$. 
Now, in view of the Gagliardo-Nirenberg inequality and the well-known fact that
$\|A(\cdot)\|_{L^2(\Omega)}$ defines a norm equivalent to $\|\cdot\|_{W^{2,2}(\Omega)}$
 on $W^{2,2}(\Omega)\cap W_0^{1,2}(\Omega)$ (see p. 129, Theorem e of \cite{Sohr}), we have
\begin{equation}
\begin{array}{rl}
\disp{C_3\|u_{\varepsilon}\|^{\frac{32}{5}}_{L^{8}(\Omega)}}
\leq&\disp{C_3\|A u_{\varepsilon}\|^{\frac{4}{5}}_{L^{2}(\Omega)}\| u_{\varepsilon}\|^{\frac{28}{5}}_{L^{6}(\Omega)}}\\
\leq&\disp{C_4(\|A u_{\varepsilon}\|^{2}_{L^{2}(\Omega)}+1),}\\
\end{array}
\label{dccvbbnnd11cfvggcz2.5ghju48156}
\end{equation}
where $C_4$ is a  positive constant.
Hence, in together with \dref{dccvbbnnd11cfvggcz2.5ghju48156} and \dref{gghhddddcz2.5ghju48cfg924}, we conclude there exists a positive constant $C_5$ such that
for all $T\in(0, T_{max,\varepsilon})$,
\begin{equation}
C_3\int_0^{T}{\|u_{\varepsilon}\|^{\frac{32}{5}}_{L^{8}(\Omega)}}\leq C_5.
\label{gddfggbnmmfghhddddcz2.5ghju48cfg924}
\end{equation}
Inserting \dref{dccvbbnnd11cfvggcz2.5ghju48156} and \dref{dd11cfvggcz2.5ghju48156}     into \dref{cz2.5ghju48156} and  using \dref{ddffbnmbnddfgcz2ddfvgbhh.htt678ddfghhhyuiihjj} and \dref{gddfggbnmmfghhddddcz2.5ghju48cfg924}, we can derive
\dref{ddxxxcvvddcvddffbbggddfgcz2vv.5ghju48cfg924ghyuji}
and \dref{gddffghhddddcz2.gghh5ghju48cfg924}.
This completes the proof of Lemma \ref{xccffgghhlemma4563025xxhjkloghyui}.
\end{proof}

With Lemmata \ref{lemmaghjssddgghhmk4563025xxhjklojjkkk}--\ref{xccffgghhlemma4563025xxhjkloghyui} at hand, we are now in the position to prove  the solution of approximate
problem \dref{1.1fghyuisda}  is actually global in time.
\begin{lemma}\label{kkklemmaghjmk4563025xxhjklojjkkk}
Let $m>2$. Then
for all $\varepsilon\in(0,1),$ the solution of  \dref{1.1fghyuisda} is global in time.
\end{lemma}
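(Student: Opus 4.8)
The plan is to argue by contradiction through the extensibility criterion \dref{1.163072x}: assuming $T_{max,\varepsilon}<\infty$, I would show that $\|n_\varepsilon(\cdot,t)\|_{L^\infty(\Omega)}$, $\|c_\varepsilon(\cdot,t)\|_{W^{1,\infty}(\Omega)}$ and $\|A^\gamma u_\varepsilon(\cdot,t)\|_{L^2(\Omega)}$ each stay bounded on $(0,T_{max,\varepsilon})$ by constants that may depend on $\varepsilon$ and $T_{max,\varepsilon}$, which contradicts \dref{1.163072x} and forces $T_{max,\varepsilon}=\infty$. Throughout I treat $\varepsilon$ as fixed and freely use the (possibly $\varepsilon$-dependent) bounds already secured: from Lemmata \ref{lemma630jklhhjj} and \ref{xccffgghhlemma4563025xxhjkloghyui} that $\nabla u_\varepsilon,\nabla c_\varepsilon\in L^\infty((0,T_{max,\varepsilon});L^2(\Omega))$ and $\Delta u_\varepsilon,\Delta c_\varepsilon\in L^2(\Omega\times(0,T_{max,\varepsilon}))$, and from Lemmata \ref{lemmaghjssddgghhmk4563025xxhjklojjkkk} and \ref{lemmddaghjssddgghhmk4563025xxhjklojjkkk} the uniform mass and energy-type bounds.

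The first and central step is an $L^\infty$-bound for $n_\varepsilon$. I would test the first equation of \dref{1.1fghyuisda} with $(n_\varepsilon+\varepsilon)^{p-1}$ for $p\geq 2$; since $\nabla\cdot u_\varepsilon=0$ the convective term drops, and the porous-medium diffusion supplies a favourable dissipation $c_p\int_\Omega|\nabla(n_\varepsilon+\varepsilon)^{(m+p-1)/2}|^2$. After integration by parts the chemotactic contribution $(p-1)\int_\Omega(n_\varepsilon+\varepsilon)^{p-1}\nabla c_\varepsilon\cdot\nabla n_\varepsilon$ would be partly absorbed by this dissipation via Young's inequality, the remainder being controlled through $\Delta c_\varepsilon$ and the already-known integrability of $c_\varepsilon$. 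Feeding this into the Gagliardo--Nirenberg inequality produces a recursive family of $L^{p_k}$-estimates along a sequence $p_k\to\infty$ in the spirit of the Moser--Alikakos iteration, yielding $\|n_\varepsilon(\cdot,t)\|_{L^\infty(\Omega)}\leq C(\varepsilon,T_{max,\varepsilon})$. I expect this degenerate iteration -- in which the $p$-dependence of every constant must be tracked and the cross-diffusion term handled so as not to spoil the recursion -- to be the main obstacle.

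With $n_\varepsilon$ bounded in $L^\infty((0,T_{max,\varepsilon});L^q(\Omega))$ for every $q<\infty$, I would upgrade $c_\varepsilon$ to $W^{1,\infty}$. Writing the second equation of \dref{1.1fghyuisda} in variation-of-constants form against the Neumann heat semigroup $(e^{t\Delta})_{t\geq0}$ and invoking its $L^q\to W^{1,\infty}$ smoothing estimates, the terms $-c_\varepsilon+n_\varepsilon$ and the drift $u_\varepsilon\cdot\nabla c_\varepsilon$ are all controllable: here $u_\varepsilon\in L^\infty((0,T_{max,\varepsilon});L^6(\Omega))$ follows from $\nabla u_\varepsilon\in L^\infty(L^2)$ and the embedding $W^{1,2}(\Omega)\hookrightarrow L^6(\Omega)$, and the regularity of $\nabla c_\varepsilon$ is already known. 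A short bootstrap in the integrability exponents then gives $\|c_\varepsilon(\cdot,t)\|_{W^{1,\infty}(\Omega)}\leq C$.

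Finally, for the fluid component I would use the Stokes representation $u_\varepsilon(\cdot,t)=e^{-tA}u_0-\int_0^t e^{-(t-s)A}\mathcal{P}\big[\kappa(Y_\varepsilon u_\varepsilon\cdot\nabla)u_\varepsilon-n_\varepsilon\nabla\phi\big]\,ds$ together with the smoothing estimates for the fractional powers $A^\gamma e^{-tA}$. The forcing $n_\varepsilon\nabla\phi$ is bounded in $L^2(\Omega)$ by the previous steps, while the convective term is dominated by $\|Y_\varepsilon u_\varepsilon\|_{L^\infty(\Omega)}\|\nabla u_\varepsilon\|_{L^2(\Omega)}$ using \dref{ssdcfvgdhhjjdfghgghjjnnhhkklld911cz2.5ghju48} and the $L^2$-bound on $\nabla u_\varepsilon$. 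Taking $\gamma\in(\tfrac12,1)$ as in \dref{ccvvx1.731426677gg} and estimating the resulting integral $\int_0^t(t-s)^{-\gamma-\cdots}\,ds$ so that its exponent remains integrable yields $\|A^\gamma u_\varepsilon(\cdot,t)\|_{L^2(\Omega)}\leq C$. The three bounds together contradict \dref{1.163072x}, whence $T_{max,\varepsilon}=\infty$ for each $\varepsilon\in(0,1)$.
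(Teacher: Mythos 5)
Your contradiction framework via the extensibility criterion \dref{1.163072x}, and your treatment of the fluid part (Stokes semigroup with $h_\varepsilon=\mathcal{P}[-\kappa(Y_\varepsilon u_\varepsilon\cdot\nabla)u_\varepsilon+n_\varepsilon\nabla\phi]$ bounded in $L^2$) and of $c_\varepsilon$ (heat-semigroup bootstrap), match the paper. But your crucial \emph{first} step --- obtaining $\|n_\varepsilon\|_{L^\infty}$ directly by a Moser--Alikakos iteration whose chemotaxis remainder is ``controlled through $\Delta c_\varepsilon$ and the already-known integrability of $c_\varepsilon$'' --- has a genuine gap. At that stage the only control on $c_\varepsilon$ beyond Lemma \ref{lemmddaghjssddgghhmk4563025xxhjklojjkkk} is the $\varepsilon$-dependent pair $\nabla c_\varepsilon\in L^\infty((0,T_{max,\varepsilon});L^2(\Omega))$ and $\Delta c_\varepsilon\in L^2(\Omega\times(0,T_{max,\varepsilon}))$ from Lemma \ref{xccffgghhlemma4563025xxhjkloghyui}. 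Testing with $(n_\varepsilon+\varepsilon)^{p-1}$ and setting $w=(n_\varepsilon+\varepsilon)^{(m+p-1)/2}$, the chemotaxis term, after integrating by parts, is of size $\int_\Omega(n_\varepsilon+\varepsilon)^{p}|\Delta c_\varepsilon|\le\|w\|_{L^{4p/(m+p-1)}(\Omega)}^{2p/(m+p-1)}\|\Delta c_\varepsilon\|_{L^2(\Omega)}$ (keeping it as $\int(n_\varepsilon+\varepsilon)^{p+1-m}|\nabla c_\varepsilon|^2$ is even worse, since pairing against $\nabla c_\varepsilon\in L^2_x$ forces an $L^\infty_x$ norm of $n_\varepsilon$, which is circular). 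Interpolating the first factor by Gagliardo--Nirenberg against the best available uniform-in-time norm, $n_\varepsilon\in L^\infty_t L^{m-1}_x$, gives $\|\nabla w\|_{L^2}^{\theta_p}\times(\mbox{lower order})$ with $\theta_p=\frac{6p-3m+3}{3p+2m-2}$; absorbing $\|\nabla w\|_{L^2}^{\theta_p}\|\Delta c_\varepsilon\|_{L^2}$ into the dissipation by Young leaves $\|\Delta c_\varepsilon(\cdot,t)\|_{L^2}^{2/(2-\theta_p)}$ with $\frac{2}{2-\theta_p}=\frac{2(3p+2m-2)}{7(m-1)}$, which exceeds $2$ as soon as $p>\frac{5(m-1)}{3}$ and grows linearly in $p$. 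Since $\|\Delta c_\varepsilon(\cdot,t)\|_{L^2}$ is only \emph{square}-integrable in time, the recursion cannot even be started beyond $p\le\frac{5(m-1)}{3}$, let alone iterated to $p_k\to\infty$ with controlled constants.

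This is exactly why the paper runs the argument in the opposite order. It first upgrades $\nabla c_\varepsilon$ to $L^\infty((0,T_{max,\varepsilon});L^{8/3}(\Omega))$, by tracking $\frac{d}{dt}\int_\Omega|\nabla c_\varepsilon|^{2\beta}$ with $\beta=\frac43$: there the troublesome term $\int_\Omega|Du_\varepsilon||\nabla c_\varepsilon|^{2\beta}$ is handled by the $\varepsilon$-dependent bound \dref{ddxcvbbggddfgcz2vv.5ghju48cfg924ghyuji}, and $\int_\Omega n_\varepsilon^2|\nabla c_\varepsilon|^{2\beta-2}$ by Young plus the space-time bound \dref{ddffbnmbnddfgcz2ddfvgbhh.htt678ddfghhhyuiihjj}, the choice $\beta=\frac43$ being precisely what makes $4(2\beta-2)=2\beta$ so that the resulting power of $|\nabla c_\varepsilon|$ can be absorbed. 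With this uniform-in-time $L^{8/3}$ bound in hand, the chemotaxis term in the $L^p$ estimate for $n_\varepsilon$ has Young exponent $\frac{12p-12m+9}{6p+6m-8}<2$ for every finite $p$, which yields $n_\varepsilon\in L^\infty_tL^p_x$ for each finite $p$ --- not yet $L^\infty$. Only then does the paper bound $A^\gamma u_\varepsilon$ (hence $u_\varepsilon\in L^\infty$), then $\nabla c_\varepsilon$ in every $L^{2\beta}$ and finally in $L^\infty$ via the variation-of-constants formula, and only with $\nabla c_\varepsilon\in L^\infty$ available does it invoke the Moser iteration (Lemma A.1 of \cite{Tao794}) to get $\|n_\varepsilon\|_{L^\infty}$. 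Your steps for $c_\varepsilon$ and $u_\varepsilon$ are sound but sit downstream of the flawed first step; to repair the proof you must insert the intermediate uniform-in-time improvement of $\nabla c_\varepsilon$ (the paper's $L^{8/3}$ estimate, or an equivalent) \emph{before} attempting any high-$p$ or $L^\infty$ estimate on $n_\varepsilon$.
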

\begin{proof}
Assuming that $T_{max,\varepsilon}$ be finite for some $\varepsilon\in(0,1)$.
Next,
applying almost exactly the same arguments as in the proof of Lemma 3.4   in \cite{Zhengsdsd6},
 we may derive the following  estimate:
 the solution of \dref{1.1fghyuisda} satisfies that for all $\beta>1$
 \begin{equation}
\begin{array}{rl}
&\disp{\frac{1}{2\beta}\frac{d}{dt}\int_{\Omega} |\nabla c_{\varepsilon}|^{2\beta} +
\int_{\Omega} |\nabla c_{\varepsilon}|^{2\beta}+\frac{1}{2}\int_\Omega  |\nabla c_{\varepsilon}|^{2\beta-2}|D^2c_{\varepsilon}|^2+
\frac{(\beta-1)}{2\beta^2}\|\nabla |\nabla c_{\varepsilon}|^\beta\|_{L^2(\Omega)}^2}\\
\leq&\disp{C_1\int_\Omega n^2_{\varepsilon} |\nabla c_{\varepsilon}|^{2\beta-2}
+\int_\Omega |Du_{\varepsilon}| |\nabla c_{\varepsilon}|^{2\beta}+C_1~~\mbox{for all}~~ t\in(0, T_{max,\varepsilon}),}\\
\end{array}
\label{cz2.5715ssdergghhffghhhtyuftgg1hhkkhhggjjllll}
\end{equation}
 where $C_1$ is a positive constant.
On the other hand, due to \dref{ddxcvbbggddfgcz2vv.5ghju48cfg924ghyuji}, we derive that there exists a positive constant $C_2$ such that
\begin{equation}
\|D u_{\varepsilon}(\cdot,t)\|_{L^2(\Omega)}\leq C_2~~\mbox{for all}~~ t\in(0, T_{max,\varepsilon}).
\label{ddxcvbbggdddfghhdfgcz2vv.5ghju48cfg924ghyuji}
\end{equation}
Hence , in light of  the  H\"{o}lder inequality and the Gagliardo--Nirenberg inequality, \dref{ddxxxcvvddcvddffbbggddfgcz2vv.5ghju48cfg924ghyuji} and the Young inequality, we conclude that
\begin{equation}
\begin{array}{rl}
\disp\int_\Omega |Du_{\varepsilon}| |\nabla c_{\varepsilon}|^{2\beta}\leq &\disp{C_2\|\nabla c_{\varepsilon}\|^{2\beta}_{L^{4\beta}(\Omega)}}\\
= &\disp{C_2\||\nabla c_{\varepsilon}|^\beta\|^{2}_{L^{4}(\Omega)}}\\
= &\disp{C_2(\|\nabla |\nabla c_{\varepsilon}|^\beta\|_{L^2(\Omega)}^{\frac{6\beta-3}{6\beta-2}}\| |\nabla c_{\varepsilon}|^\beta\|_{L^\frac{2}{\beta}(\Omega)}^{\frac{6\beta-1}{6\beta-2}}+\|   |\nabla c_{\varepsilon}|^\beta\|_{L^\frac{2}{\beta}
(\Omega)}^{2})}\\
\leq &\disp{C_3(\|\nabla |\nabla c_{\varepsilon}|^\beta\|_{L^2(\Omega)}^{\frac{6\beta-3}{6\beta-2}}+1)}\\
\leq &\disp{\frac{(\beta-1)}{8\beta^2}\|\nabla |\nabla c_{\varepsilon}|^\beta\|_{L^2(\Omega)}^{2}+C_4~~\mbox{for all}~~ t\in(0, T_{max,\varepsilon})}\\
\end{array}
\label{cz2.5715ssdeddertrftgg1hhkkhhggjjllll}
\end{equation}
with some positive constants $C_3$
and $C_4.$
Now, inserting \dref{cz2.5715ssdeddertrftgg1hhkkhhggjjllll} into  \dref{cz2.5715ssdergghhffghhhtyuftgg1hhkkhhggjjllll}, we derive that there exists a positive constant $C_5$ such that
\begin{equation}
\begin{array}{rl}
&\disp{\frac{1}{2\beta}\frac{d}{dt}\int_{\Omega} |\nabla c_{\varepsilon}|^{2\beta} +
\int_{\Omega} |\nabla c_{\varepsilon}|^{2\beta}+\frac{1}{2}\int_\Omega  |\nabla c_{\varepsilon}|^{2\beta-2}|D^2c_{\varepsilon}|^2+
\frac{3(\beta-1)}{8\beta^2}\|\nabla |\nabla c_{\varepsilon}|^\beta\|_{L^2(\Omega)}^2}\\
\leq&\disp{C_1\int_\Omega n^2_{\varepsilon} |\nabla c_{\varepsilon}|^{2\beta-2}
+C_5~~\mbox{for all}~~ t\in(0, T_{max,\varepsilon}).}\\
\end{array}
\label{111cz2.5715ssdergghhffghhhtyuftgg1hhkkhhggjjllll}
\end{equation}
Next, with the help of the Young inequality, we derive that there exists a positive constant $C_6$ such that
\begin{equation}
\begin{array}{rl}
\disp{C_1\int_\Omega n^2_{\varepsilon} |\nabla c_{\varepsilon}|^{2\beta-2}\leq
\frac{1}{4}\int_\Omega  |\nabla c_{\varepsilon}|^{4(2\beta-2)}+C_6\int_\Omega n^{\frac{8}{3}}_{\varepsilon}+C_1.}\\
\end{array}
\label{111cz2.571ffg5ssdergghhffg334566hhhtyuftgg1hhkkhhggjjllll}
\end{equation}
Now, choosing $\beta=\frac{4}{3}$, in \dref{111cz2.5715ssdergghhffghhhtyuftgg1hhkkhhggjjllll} and \dref{111cz2.571ffg5ssdergghhffg334566hhhtyuftgg1hhkkhhggjjllll},
 we conclude that
\begin{equation}
\begin{array}{rl}
&\disp{\frac{1}{\frac{8}{3}}\frac{d}{dt}\int_{\Omega} |\nabla c_{\varepsilon}|^{\frac{8}{3}} +\frac{3}{4}
\int_{\Omega} |\nabla c_{\varepsilon}|^{\frac{8}{3}}+\frac{1}{2}\int_\Omega  |\nabla c_{\varepsilon}|^{\frac{2}{3}}|D^2c_{\varepsilon}|^2+
\frac{3}{16}\|\nabla |\nabla c_{\varepsilon}|^\frac{4}{3}\|_{L^2(\Omega)}^2}\\
\leq&\disp{C_6\int_\Omega n^{\frac{8}{3}}_{\varepsilon}+C_1.}\\
\end{array}
\label{cz2.5715ssdergssdrffttghhffghhhtyuftgg1hhkkhhggjjllll}
\end{equation}
Here we have use the fact that $4(2\beta-2)=2\beta.$
Hence, in light of \dref{ddffbnmbnddfgcz2ddfvgbhh.htt678ddfghhhyuiihjj} and $m>2$, by \dref{cz2.5715ssdergssdrffttghhffghhhtyuftgg1hhkkhhggjjllll}, we derive that there exists a positive constant $C_7$ such that
\begin{equation}
\begin{array}{rl}
&\disp{\|\nabla c_\varepsilon(\cdot, t)\|_{L^{\frac{8}{3}}(\Omega)}\leq C_7~~ \mbox{for all}~~ t\in(0,T_{max,\varepsilon})}\\
\end{array}
\label{zjccffgbhhjvcc22344789c4456788cvvvbbvscz2.5297x9630111kkuu}
\end{equation}
Now,
employing almost exactly the same arguments as in the proof of Lemma 3.3   in \cite{Zhengsdsd6},
 we conclude that
 the solution of \dref{1.1fghyuisda} satisfies that for all $p>1$,
\begin{equation}
\begin{array}{rl}
&\disp{\frac{1}{{p}}\frac{d}{dt}\|n_{\varepsilon}+\varepsilon\|^{{{p}}}_{L^{{p}}(\Omega)}+
\frac{2m(p-1)}{(m+p-1)^2}\int_{\Omega} |\nabla (n_{\varepsilon}+\varepsilon)^{\frac{m+p-1}{2}}|^2\leq
C_8\int_\Omega (n_{\varepsilon}+\varepsilon)^{p+1-m}|\nabla c_{\varepsilon}|^2 }\\
\end{array}
\label{cz2.5ghhjuyui22ssxccnnihjj}
\end{equation}
for all $t\in(0,T_{max,\varepsilon})$ and some positive constant $C_7.$
By the  H\"{o}lder inequality and \dref{zjccffgbhhjvcc22344789c4456788cvvvbbvscz2.5297x9630111kkuu} and using $m>2$ and the Gagliardo--Nirenberg inequality, we derive there exist positive constants $C_9,C_{10}$ and $C_{11}$ such that
\begin{equation}
\begin{array}{rl}
&\disp\int_\Omega (n_{\varepsilon}+\varepsilon)^{p+1-m} |\nabla c_{\varepsilon}|^2\\\
\leq&\disp{ \left(\int_\Omega{(n_{\varepsilon}+\varepsilon)^{4(p+1-m)}}\right)^{\frac{1}{4}}\left(\int_\Omega |\nabla c_{\varepsilon}|^{\frac{8}{3}}\right)^{\frac{3}{4}}}\\
\leq&\disp{ C_9\|  (n_{\varepsilon}+\varepsilon)^{\frac{p+m-1}{2}}\|^{\frac{2(p+1-m)}{p+m-1}}_{L^{\frac{8(p+1-m)}{p+m-1}}(\Omega)}}\\
\leq&\disp{C_{10}(\|\nabla   (n_{\varepsilon}+\varepsilon)^{\frac{p+m-1}{2}}\|_{L^2(\Omega)}^{\mu_1}\|  (n_{\varepsilon}+\varepsilon)^{\frac{p+m-1}{2}}\|_{L^\frac{2}{p+m-1}(\Omega)}^{1-\mu_1}+\|  (n_{\varepsilon}+\varepsilon)^{\frac{p+m-1}{2}}\|_{L^\frac{2}{p+m-1}(\Omega)})^{\frac{2(p+1-m)}{p+m-1}}}\\
\leq&\disp{C_{11}(\|\nabla   (n_{\varepsilon}+\varepsilon)^{\frac{p+m-1}{2}}\|_{L^2(\Omega)}^{\frac{2(p+1-m)\mu_1}{p+m-1}}+1)}\\
=&\disp{C_{11}(\|\nabla   (n_{\varepsilon}+\varepsilon)^{\frac{p+m-1}{2}}\|_{L^2(\Omega)}^{\frac{12p-12m+9}{6p+6m-8}}+1)~~\mbox{for all}~~ t\in(0, T_{max,\varepsilon}),}\\
\end{array}
\label{cz2.wwsdeddfvgbhnjerfvghyh}
\end{equation}
where
$$\mu_1=\frac{\frac{3[p+m-1]}{2}-\frac{3(p+m-1)}{8(p+1-m)}}{-\frac{1}{2}+\frac{3[p+m-1]}{2}}\in(0,1).$$
Since, $m>2$ yields to $\frac{12p-12m+9}{6p+6m-8}<2$, in light of \dref{cz2.wwsdeddfvgbhnjerfvghyh} and the Young inequality, we derive that
there exists a positive constant $C_{12}$ such that
\begin{equation}
\begin{array}{rl}
\disp C_8\int_\Omega (n_{\varepsilon}+\varepsilon)^{p+1-m} |\nabla c_{\varepsilon}|^2\leq \frac{m(p-1)}{(m+p-1)^2}\|\nabla   (n_{\varepsilon}+\varepsilon)^{\frac{p+m-1}{2}}\|_{L^2(\Omega)}^{\frac{12p-12m+9}{6p+6m-8}}+C_{12}~\mbox{for all}~t\in(0, T_{max,\varepsilon}).\\
\end{array}
\label{cz2.wwddfvgbsdeddfvgbhnjerfvghyh}
\end{equation}
Hence, inserting \dref{cz2.wwddfvgbsdeddfvgbhnjerfvghyh} into \dref{cz2.5ghhjuyui22ssxccnnihjj}, we derive that
\begin{equation}
\begin{array}{rl}
&\disp{\frac{1}{{p}}\frac{d}{dt}\|n_{\varepsilon}+\varepsilon\|^{{{p}}}_{L^{{p}}(\Omega)}+
\frac{2m(p-1)}{(m+p-1)^2}\int_{\Omega} |\nabla (n_{\varepsilon}+\varepsilon)^{\frac{m+p-1}{2}}|^2\leq
C_{12}~\mbox{for all}~t\in(0, T_{max,\varepsilon}). }\\
\end{array}
\label{cz2.5ghhjuyuiwssxxddcc22ssxccnnihjj}
\end{equation}
Now, with some basic analysis, we may derive that for all $p>1,$ there exists a positive constant $C_{13}$ such that
\begin{equation}
\|n_{\varepsilon}(\cdot,t)\|_{L^p(\Omega)}\leq C_{13}~~\mbox{for all}~~ t\in(0, T_{max,\varepsilon}).
\label{ddxcvbbggdddfghhdfddcfvgbbgcz2vv.5gttghju48cfg924ghyuji}
\end{equation}
Let $h_{\varepsilon}(x,t)=\mathcal{P}[-\kappa (Y_{\varepsilon}u_{\varepsilon} \cdot \nabla)u_{\varepsilon}+n_{\varepsilon}\nabla \phi ]$.
Then along with \dref{czfvgb2.5ghhjuyuccvviihjj} and \dref{ddxcvbbggdddfghhdfddcfvgbbgcz2vv.5gttghju48cfg924ghyuji}, there exists a positive constant $C_{13}$ such that $\|h_{\varepsilon}(\cdot,t)\|_{L^2(\Omega)} \leq C_{14}$ for
all $t\in (0, T_{max,\varepsilon})$. Hence,  we pick an arbitrary $\gamma\in (\frac{3}{4}, 1),$ then in light of the  smoothing properties of the
Stokes semigroup (\cite{Giga1215}), we derive  that for some $C_{15} > 0$, we have
\begin{equation}
\begin{array}{rl}
\|A^\gamma u_\varepsilon(\cdot, t)\|_{L^2(\Omega)}\leq&\disp{\|A^\gamma
e^{-tA}u_0\|_{L^2(\Omega)} +\int_0^t\|A^\gamma e^{-(t-\tau)A}h_\varepsilon(\cdot,\tau)d\tau\|_{L^2(\Omega)}d\tau}\\
\leq&\disp{C_{15} t^{-\lambda_1(t-1)}
\|u_0\|_{L^2(\Omega)} +C_{15}\int_0^t(t-\tau)^{-\gamma}\|h_\varepsilon(\cdot,\tau)\|_{L^2(\Omega)}d\tau}\\
\leq&\disp{C_{15}t^{-\lambda_1(t-1)}
\|u_0\|_{L^2(\Omega)} +\frac{C_{14}C_{15}T^{1-\gamma}_{max,\varepsilon}}{1-\gamma}~~ \mbox{for all}~~ t\in(0,T_{max,\varepsilon}).}\\
\end{array}
\label{cz2.57151ccvvhccvvhjjjkkhhggjjllll}
\end{equation}
Observe that $\gamma>\frac{3}{4},$
 $D(A^\gamma)$ is continuously embedded into $L^\infty(\Omega)$, therefore, due to \dref{cz2.57151ccvvhccvvhjjjkkhhggjjllll}, we derive that there exists a positive constant $C_{16}$ such that
 \begin{equation}
\begin{array}{rl}
\|u_\varepsilon(\cdot, t)\|_{L^\infty(\Omega)}\leq  C_{16}~~ \mbox{for all}~~ t\in(0,T_{max,\varepsilon}).\\
\end{array}
\label{cz2.5715jkkcvccvvhjjjkddfffffkhhgll}
\end{equation}
Now, for any $\beta>1$, choosing $p>0$ large enough such that $p>2\beta$, then due to \dref{ddxcvbbggdddfghhdfddcfvgbbgcz2vv.5gttghju48cfg924ghyuji} and \dref{111cz2.5715ssdergghhffghhhtyuftgg1hhkkhhggjjllll}, in light of the Young inequality,  we derive that there exists a positive constant  $C_{17}$ such that
\begin{equation}
\begin{array}{rl}
&\disp{\frac{1}{2\beta}\frac{d}{dt}\int_{\Omega} |\nabla c_{\varepsilon}|^{2\beta} +\frac{1}{2}
\int_{\Omega} |\nabla c_{\varepsilon}|^{2\beta}+\frac{1}{2}\int_\Omega  |\nabla c_{\varepsilon}|^{2\beta-2}|D^2c_{\varepsilon}|^2+
\frac{3(\beta-1)}{8\beta^2}\|\nabla |\nabla c_{\varepsilon}|^\beta\|_{L^2(\Omega)}^2}\\
\leq&\disp{C_{17}~~\mbox{for all}~~ t\in(0, T_{max,\varepsilon}).}\\
\end{array}
\label{111cz2.5715sffggsdergghffgbhhhffghhhtyuftgg1hhkkhhggjjllll}
\end{equation}
Now, integrating the above inequality in time, we derive that there exists a positive constant $C_{18}$ such that
\begin{equation}
\|\nabla c_{\varepsilon}(\cdot,t)\|_{L^{2\beta}(\Omega)}\leq C_{18}~~\mbox{for all}~~ t\in(0, T_{max,\varepsilon})~~~\mbox{and}~~\beta>1.
\label{ddxcvbbggdddfgffgghhdfddcfvgbbgcz2vv.5gttghju48cfg924ghyuji}
\end{equation}

In order to get the boundedness of $\|\nabla c_\varepsilon(\cdot, t)\|_{L^\infty(\Omega)}$,
we rewrite the variation-of-constants formula for $c_{\varepsilon}$ in the form
$$c_\varepsilon(\cdot, t) = e^{t(\Delta-1) }c_0 +\int_0^te^{(t-s)(\Delta-1)}(n_\varepsilon -u_{\varepsilon} \cdot \nabla c_{\varepsilon})(\cdot,s)ds~~ \mbox{for all}~~ t\in(0,T_{max,\varepsilon}).$$
Now, we choose $\theta\in(\frac{7}{8},1),$ then the domain of the fractional power $D((-\Delta + 1)^\theta)\hookrightarrow W^{1,\infty}(\Omega)$
(\cite{Zhangddff4556}). Hence, in view of $L^p$-$L^q$ estimates associated heat semigroup, \dref{ccvvx1.731426677gg}, \dref{ddxcvbbggdddfghhdfddcfvgbbgcz2vv.5gttghju48cfg924ghyuji}, \dref{cz2.5715jkkcvccvvhjjjkddfffffkhhgll} and \dref{ddxcvbbggdddfgffgghhdfddcfvgbbgcz2vv.5gttghju48cfg924ghyuji}, we derive  that there exist positive constants $C_{19},C_{20}$ and $C_{21}$ such that
\begin{equation}
\begin{array}{rl}
&\disp{\|\nabla c_\varepsilon(\cdot, t)\|_{W^{1,\infty}(\Omega)}}\\
\leq&\disp{C_{19}t^{-\theta}e^{-\lambda t}\|c_0\|_{L^{4}(\Omega)}}\\
&+\disp{\int_{0}^t(t-s)^{-\theta}e^{-\lambda(t-s)}\|(n_\varepsilon-u_{\varepsilon} \cdot \nabla c_{\varepsilon})(s)\|_{L^4(\Omega)}ds}\\
\leq&\disp{C_{20}\tau^{-\theta}+C_{20}\int_{0}^t(t-s)^{-\theta}e^{-\lambda(t-s)}+C_{20}\int_{0}^t(t-s)^{-\theta}e^{-\lambda(t-s)}[\|n_\varepsilon(s)\|_{L^4(\Omega)}+
\|\nabla c_{\varepsilon}(s)\|_{L^4(\Omega)}]ds}\\
\leq&\disp{C_{21}~~ \mbox{for all}~~ t\in(\tau,T_{max,\varepsilon})}\\
\end{array}
\label{zjccffgbhjcvvvbscz2.5297x96301ku}
\end{equation}
with $\tau\in(0,T_{max,\varepsilon})$.
Next, using the outcome of \dref{cz2.5ghhjuyui22ssxccnnihjj} with suitably large $p$ as a starting point, we may employ
a Moser-type iteration (see  e.g. Lemma A.1 of  \cite{Tao794}) applied to the first equation of \dref{1.1fghyuisda} to get that
\begin{equation}
\begin{array}{rl}
\|n_{\varepsilon}(\cdot, t)\|_{L^{{\infty}}(\Omega)}\leq C_{22} ~~ \mbox{for all}~~~  t\in(\tau,T_{max,\varepsilon}) \\
\end{array}
\label{cz2.5g5gghh56789hhjui78jj90099}
\end{equation}
and some positive constant $C_{22}$.
In view of \dref{cz2.5715jkkcvccvvhjjjkddfffffkhhgll},  \dref{zjccffgbhjcvvvbscz2.5297x96301ku} and \dref{cz2.5g5gghh56789hhjui78jj90099}, we apply Lemma \ref{lemma70} to reach a contradiction.
\end{proof}

\section{Regularity properties of time derivatives}

In this subsection, we provide some time-derivatives uniform estimates of solutions to the
system \dref{1.1fghyuisda}. The estimate is
used in this Section  to construct the  weak solution of the equation \dref{1.1}.
This will be the purpose of the following three lemma:

\begin{lemma}\label{qqqqlemma45630hhuujjuuyytt}
Let $m>2$,
\dref{dd1.1fghyuisdakkkllljjjkk} and \dref{ccvvx1.731426677gg}
 hold.
 Then for any $T>0, $
  one can find $C > 0$ independent if $\varepsilon$ such that 
\begin{equation}
 \begin{array}{ll}
\disp\int_0^T\|\partial_tn_\varepsilon^{m-1}(\cdot,t)\|_{(W^{2,q}(\Omega))^*}dt  \leq C(T+1)\\
   \end{array}\label{1.1ddfgeddvbnmklllhyuisda}
\end{equation}
as well as
\begin{equation}
 \begin{array}{ll}
  \disp\int_0^T\|\partial_tc_\varepsilon(\cdot,t)\|_{(W^{1,\frac{5}{2}}(\Omega))^*}^{\frac{5}{3}}dt  \leq C(T+1)\\
   \end{array}\label{wwwwwqqqq1.1dddfgbhnjmdfgeddvbnmklllhyussddisda}
\end{equation}
and
\begin{equation}
 \begin{array}{ll}
  \disp\int_0^T\|\partial_tu_\varepsilon(\cdot,t)\|_{(W^{1,2}(\Omega))^*}^2dt  \leq C(T+1).\\
   \end{array}\label{wwwwwqqqq1.1dddfgkkllbhddffgggnjmdfgeddvbnmklllhyussddisda}
\end{equation}
\end{lemma}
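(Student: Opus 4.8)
The plan is to derive all three time-derivative estimates by testing the respective equations of \dref{1.1fghyuisda} against arbitrary test functions in the appropriate Sobolev spaces and bounding the resulting integrals using the spatio-temporal estimates already established in Lemmata \ref{lemmaghjssddgghhmk4563025xxhjklojjkkk} and \ref{lemmddaghjssddgghhmk4563025xxhjklojjkkk}. The key observation is that each of the three target norms is a \emph{dual} Sobolev norm, so for each equation it suffices to pair the PDE (in weak form) with a test function $\varphi$ of unit norm in the predual space, integrate by parts to move derivatives off the unknown, and then estimate each term by H\"older's inequality using only quantities whose $\varepsilon$-uniform bounds are already in hand.

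First I would treat \dref{wwwwwqqqq1.1dddfgkkllbhddffgggnjmdfgeddvbnmklllhyussddisda}, which is the simplest. Testing the third equation of \dref{1.1fghyuisda} against $\varphi\in W^{1,2}(\Omega)$ (divergence-free), one obtains
$$
\left|\int_\Omega \partial_t u_\varepsilon\cdot\varphi\right|
\leq \left|\int_\Omega \nabla u_\varepsilon\cdot\nabla\varphi\right|
+|\kappa|\left|\int_\Omega (Y_\varepsilon u_\varepsilon\cdot\nabla)u_\varepsilon\cdot\varphi\right|
+\left|\int_\Omega n_\varepsilon\nabla\phi\cdot\varphi\right|,
$$
and by H\"older's inequality each term is controlled by $\|\varphi\|_{W^{1,2}(\Omega)}$ times a factor involving $\|\nabla u_\varepsilon\|_{L^2(\Omega)}$, $\|u_\varepsilon\|_{L^2(\Omega)}$ and $\|n_\varepsilon\|_{L^2(\Omega)}$ (using $W^{1,2}\hookrightarrow L^6$ for the convective and forcing terms). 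Squaring and integrating in time, the spatial $L^\infty_t$ bound \dref{czfvgb2.5ghhjuyuccvviihjj} together with the spacetime $L^2$ bound on $\nabla u_\varepsilon$ from \dref{bnmbncz2.5ghhjuyuivvbnnihjj} yields the desired $(T+1)$-type estimate. For \dref{wwwwwqqqq1.1dddfgbhnjmdfgeddvbnmklllhyussddisda} I would proceed analogously, testing the second equation against $\varphi\in W^{1,5/2}(\Omega)$; here the product $n_\varepsilon\varphi$, $c_\varepsilon u_\varepsilon\cdot\nabla\varphi$ and the diffusion term are estimated so that a power $5/3$ in time matches the exponent $8(m-1)/3>8/3$ available from \dref{bnmbncz2.ffghh5ghhjuyuivvbnnihjj} and the bounds on $c_\varepsilon$ and $u_\varepsilon$.

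The genuinely delicate estimate is \dref{1.1ddfgeddvbnmklllhyuisda}, the bound on $\partial_t n_\varepsilon^{m-1}$ in $(W^{2,q}(\Omega))^*$. I would first write $\partial_t n_\varepsilon^{m-1}=(m-1)n_\varepsilon^{m-2}\partial_t n_\varepsilon$ and substitute the first equation of \dref{1.1fghyuisda}, then test against $\varphi\in W^{2,q}(\Omega)$. The point of using the second-order dual space $(W^{2,q})^*$ rather than $(W^{1,q})^*$ is that it allows two derivatives to be thrown onto $\varphi$, so that the strongly nonlinear diffusion term $\Delta(n_\varepsilon+\varepsilon)^m$ can be handled after integrating by parts twice, producing a term of the form $\int_\Omega (n_\varepsilon+\varepsilon)^m\,\Delta\varphi$ bounded by $\|(n_\varepsilon+\varepsilon)^m\|_{L^1}\|\Delta\varphi\|_{L^\infty}$; here the spacetime integrability of $n_\varepsilon^{8(m-1)/3}$ from \dref{bnmbncz2.ffghh5ghhjuyuivvbnnihjj} and a Sobolev embedding $W^{2,q}\hookrightarrow W^{1,\infty}\cap L^\infty$ for $q>3$ provide the needed control. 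The main obstacle will be bookkeeping the various exponents so that the chemotactic term $\nabla\cdot(n_\varepsilon\nabla c_\varepsilon)$ and the convective term $u_\varepsilon\cdot\nabla n_\varepsilon$, once integrated by parts, are dominated in $L^1_t$ by products of the already-established bounds — in particular verifying that $n_\varepsilon|\nabla c_\varepsilon|$ and $n_\varepsilon|u_\varepsilon|$ live in $L^1(\Omega\times(0,T))$ uniformly in $\varepsilon$, which follows from combining the $L^{8(m-1)/3}$ spacetime bound on $n_\varepsilon$ with the gradient bound on $c_\varepsilon$ from \dref{bnmbncz2.ffghh5ghhjuyuivvbnnihjj} and the $L^2$ bound on $u_\varepsilon$ via H\"older. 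Once the exponent arithmetic is checked, integrating the resulting pointwise-in-time estimate over $(0,T)$ gives the $C(T+1)$ bound.
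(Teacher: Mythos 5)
Your treatment of the $c$-estimate \dref{wwwwwqqqq1.1dddfgbhnjmdfgeddvbnmklllhyussddisda} follows the paper's route (test by $\varphi$, integrate by parts once, H\"older with the dual exponents $\frac{5}{3}$ and $\frac{5}{2}$, then the spacetime bounds of Lemmata \ref{lemmaghjssddgghhmk4563025xxhjklojjkkk}--\ref{lemmddaghjssddgghhmk4563025xxhjklojjkkk}) and is fine. The estimate \dref{1.1ddfgeddvbnmklllhyuisda}, however, contains a genuine gap. Your key step --- integrating by parts twice so that the diffusion term becomes $\int_\Omega (n_\varepsilon+\varepsilon)^m\Delta\varphi$ --- is not available, because the quantity differentiated in time is $n_\varepsilon^{m-1}$, not $n_\varepsilon$. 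After writing $\partial_t n_\varepsilon^{m-1}=(m-1)n_\varepsilon^{m-2}\partial_t n_\varepsilon$ and substituting the first equation, the diffusion term reads $(m-1)\int_\Omega n_\varepsilon^{m-2}\varphi\,\Delta(n_\varepsilon+\varepsilon)^m$: the effective test function is $n_\varepsilon^{m-2}\varphi$, so moving both derivatives off $(n_\varepsilon+\varepsilon)^m$ produces $\int_\Omega(n_\varepsilon+\varepsilon)^m\Delta\bigl(n_\varepsilon^{m-2}\varphi\bigr)$, which contains second derivatives of $n_\varepsilon$ (and terms like $n_\varepsilon^{m-4}|\nabla n_\varepsilon|^2$) that none of the a priori estimates control. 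Moreover, even the term you do write cannot be bounded as claimed: $\|\Delta\varphi\|_{L^\infty(\Omega)}$ is \emph{not} dominated by $\|\varphi\|_{W^{2,q}(\Omega)}$; the embedding $W^{2,q}\hookrightarrow W^{1,\infty}$ for $q>3$ controls $\nabla\varphi$ in $L^\infty$ but not $D^2\varphi$. The paper's proof (see \dref{gbhncvbmdcfvgcz2.5ghju48}--\dref{yyygbhncvbmdcxxcdfvgbfvgcz2.5ghju48}) integrates by parts only \emph{once}, keeps the terms $(n_\varepsilon+\varepsilon)^{m-1}n_\varepsilon^{m-2}\nabla n_\varepsilon\cdot\nabla\varphi$ and $(n_\varepsilon+\varepsilon)^{m-1}n_\varepsilon^{m-3}|\nabla n_\varepsilon|^2\varphi$, and bounds them by the Young inequality through $\int_0^T\int_\Omega(n_\varepsilon+\varepsilon)^{2m-4}|\nabla n_\varepsilon|^2$ and $\int_0^T\int_\Omega n_\varepsilon^{\frac{8(m-1)}{3}}$, i.e.\ \dref{bnmbncz2.5ghhjuyuivvbnnihjj} and \dref{ddffbnmbnddfgcz2ddfvgbhh.htt678ddfghhhyuiihjj}; the space $W^{2,q}$, $q>3$, enters solely through $W^{2,q}\hookrightarrow W^{1,\infty}$, not to absorb a second integration by parts.

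The $u$-estimate \dref{wwwwwqqqq1.1dddfgkkllbhddffgggnjmdfgeddvbnmklllhyussddisda} also does not close as you describe. H\"older plus $W^{1,2}\hookrightarrow L^6$ gives at best
$\bigl|\int_\Omega (Y_\varepsilon u_\varepsilon\cdot\nabla)u_\varepsilon\cdot\varphi\bigr|\le C\|u_\varepsilon\|_{L^2(\Omega)}^{1/2}\|\nabla u_\varepsilon\|_{L^2(\Omega)}^{3/2}\|\varphi\|_{W^{1,2}(\Omega)}$,
and the square $\|u_\varepsilon\|_{L^2(\Omega)}\|\nabla u_\varepsilon\|_{L^2(\Omega)}^{3}$ is not integrable in time from \dref{bnmbncz2.5ghhjuyuivvbnnihjj}, which provides only $\nabla u_\varepsilon\in L^2(\Omega\times(0,T))$. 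Equivalently, putting $Y_\varepsilon u_\varepsilon\otimes u_\varepsilon$ in $L^2(\Omega\times(0,T))$ would require roughly $u_\varepsilon\in L^4_{x,t}$, whereas the energy bounds yield only $L^{10/3}_{x,t}$ (cf.\ \dref{5555bnmbncz2ddfvgffghhbhh.htt678hyuiihjj}); this is the familiar three-dimensional Navier--Stokes obstruction, which by itself gives $\partial_t u_\varepsilon$ in $L^{4/3}$ of the dual norm, not $L^2$. The paper closes this step by a different ingredient that your proposal never invokes: the $L^\infty$-bound on the Yosida approximation, $\|Y_\varepsilon u_\varepsilon\|_{L^\infty(\Omega)}\le C$ from \dref{ssdcfvgdhhjjdfghgghjjnnhhkklld911cz2.5ghju48}, which reduces $\int_0^T\int_\Omega|Y_\varepsilon u_\varepsilon\otimes u_\varepsilon|^2$ to quantities controlled by \dref{czfvgb2.5ghhjuyuccvviihjj} and \dref{ddffbnmbnddfgcz2ddfvgbhh.htt678ddfghhhyuiihjj} (see \dref{wwwwwqqqqgbhncvbmdcfxxxcvxxcvvbddfghnxdddffckkvbgtyyiiobddfffbvgcz2.5ghju48}). (That bound stems from Lemma \ref{lemma630jklhhjj}, whose constant is stated as $\varepsilon$-dependent, so this point is delicate even in the paper; but in any case a pure H\"older/Sobolev argument of the kind you propose cannot produce the claimed $L^2$-in-time bound.)
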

\begin{proof}
Firstly,  due to \dref{czfvgb2.5ghhjuyuccvviihjj}, \dref{bnmbncz2.5ghhjuyuivvbnnihjj} and \dref{ddffbnmbnddfgcz2ddfvgbhh.htt678ddfghhhyuiihjj}, employing  the
H\"{o}lder inequality (with two exponents $\frac{4m-1}{4(m-1)}$ and $\frac{4(m-1)}{3}$) and the Gagliardo-Nirenberg inequality, we conclude  that there exist positive
 constants $C_{1},C_2,C_3$ and $C_4$ 
such that
\begin{equation}
\begin{array}{rl}
\disp\int_{0}^T\disp\int_{\Omega}|m(n_{\varepsilon}+\varepsilon)^{m-1}\nabla n_{\varepsilon}|^{\frac{8(m-1)}{4m-1}}
\leq&\disp{C_1\left[\int_{0}^T\disp\int_{\Omega}(n_{\varepsilon}+\varepsilon)^{2m-4}|\nabla n_{\varepsilon}|^2\right]^{\frac{4(m-1)}{4m-1}} \left[\int_{0}^T\disp\int_{\Omega}[n_{\varepsilon}+\varepsilon ]^{\frac{8(m-1)}{3}}\right]^{\frac{3}{4m-1}} }\\
\leq&\disp{C_{2}(T+1)~~\mbox{for all}~~ T > 0}\\
\end{array}
\label{5555ddffbnmbncz2ddfvgffgtyybhh.htt678ghhjjjddfghhhyuiihjj}
\end{equation}
and
\begin{equation}
\begin{array}{rl}
\disp\int_{0}^T\disp\int_{\Omega} |u_{\varepsilon}|^{\frac{10}{3}} =&\disp{\int_{0}^T\| {u_{\varepsilon}}\|^{{\frac{10}{3}}}_{L^{\frac{10}{3}}(\Omega)}}\\
\leq&\disp{C_3\int_{0}^T\left(\| \nabla{u_{\varepsilon}}\|^{2}_{L^{2}(\Omega)}\|{u_{\varepsilon}}\|^{{\frac{4}{3}}}_{L^{2}(\Omega)}+
\|{u_{\varepsilon}}\|^{{\frac{10}{3}}}_{L^{2}(\Omega)}\right)}\\
\leq&\disp{C_4(T+1)~~\mbox{for all}~~ T > 0.}\\
\end{array}
\label{5555bnmbncz2ddfvgffghhbhh.htt678hyuiihjj}
\end{equation}
Next,
testing the first equation of \dref{1.1fghyuisda}
 by certain   $({m-1})n_{\varepsilon}^{m-2}\varphi\in C^{\infty}(\bar{\Omega})$, we have
 \begin{equation}
\begin{array}{rl}
&\disp\left|\int_{\Omega}(n_{\varepsilon}^{m-1})_{t}\varphi\right|\\
 =&\disp{\left|\int_{\Omega}\left[\Delta (n_{\varepsilon}+\varepsilon)^m-\nabla\cdot(n_{\varepsilon}\nabla c_{\varepsilon})-u_{\varepsilon}\cdot\nabla n_{\varepsilon}\right]\cdot({m-1})n_{\varepsilon}^{m-2}\varphi\right|}
\\
\leq&\disp{\left|-(m-1)\int_\Omega \left[m(n_{\varepsilon}+\varepsilon)^{m-1}n_{\varepsilon}^{m-2}\nabla n_{\varepsilon}\cdot\nabla\varphi+(m-2) (n_{\varepsilon}+\varepsilon)^{m-1}n_{\varepsilon}^{m-3}|\nabla n_{\varepsilon}|^2\varphi\right]\right|}\\
&+\disp{(m-1)\left|\int_\Omega[(m-2) n_{\varepsilon}^{m-2}\nabla n_{\varepsilon}\cdot\nabla c_{\varepsilon}\varphi+ n_{\varepsilon}^{m-1}\nabla c_{\varepsilon}\cdot\nabla \varphi]\right|
+\left|\int_\Omega n_{\varepsilon}^{m-1}u_\varepsilon\cdot\nabla\varphi\right|}\\
\leq&\disp{m(m-1)\left\{\int_\Omega \left[(n_{\varepsilon}+\varepsilon)^{m-1}n_{\varepsilon}^{m-2}|\nabla n_{\varepsilon}|+ (n_{\varepsilon}+\varepsilon)^{m-1}n_{\varepsilon}^{m-3}|\nabla n_{\varepsilon}|^2\right]\right\}\|\varphi\|_{W^{1,\infty}(\Omega)}}\\
&+\disp{(m-1)^2\left\{\int_\Omega[ n_{\varepsilon}^{m-2}|\nabla n_{\varepsilon}||\nabla c_{\varepsilon}|+ n_{\varepsilon}^{m-1}|\nabla c_{\varepsilon}|+ n_{\varepsilon}^{m-1}|u_\varepsilon|]\right\}\|\varphi\|_{W^{1,\infty}(\Omega)}}\\
\end{array}
\label{gbhncvbmdcfvgcz2.5ghju48}
\end{equation}
for all $t>0$.
Hence, observe that the embedding $W^{2,q }(\Omega)\hookrightarrow  W^{1,\infty}(\Omega)(q>3)$, due to \dref{ddffbnmbnddfgcz2ddfvgbhh.htt678ddfghhhyuiihjj}, \dref{bnmbncz2.5ghhjuyuivvbnnihjj} and \dref{5555bnmbncz2ddfvgffghhbhh.htt678hyuiihjj}, applying $m>2$ and the Young inequlity, we deduce $C_1,C_2$ and $C_3$ such
that
\begin{equation}
\begin{array}{rl}
&\disp\int_0^T\|\partial_{t}n_{\varepsilon}^{m-1}(\cdot,t)\|_{(W^{2,q }(\Omega))^*}dt\\
\leq&\disp{C_1\left\{\int_0^T\int_{\Omega}(n_{\varepsilon}+\varepsilon)^{2m-4}|\nabla n_{\varepsilon}|^{2}+\int_0^T\int_{\Omega}n_{\varepsilon}^{2m-2}+\int_0^T\int_{\Omega}|\nabla c_{\varepsilon}|^{2}+\int_0^T\int_{\Omega}|u_\varepsilon|^{2}\right\}
}\\
\leq&\disp{C_2\left\{\int_0^T\int_{\Omega}(n_{\varepsilon}+\varepsilon)^{2m-4}|\nabla n_{\varepsilon}|^{2}+\int_0^T\int_{\Omega}|\nabla c_{\varepsilon}|^{2}+\int_0^T\int_{\Omega}n_{\varepsilon}^{\frac{8(m-1)}{3}}+\int_0^T\int_{\Omega}|u_\varepsilon|^{\frac{10}{3}}+T\right\}
}\\
\leq&\disp{C_3(T+1)~~\mbox{for all}~~ T > 0,
}\\
\end{array}
\label{yyygbhncvbmdcxxcdfvgbfvgcz2.5ghju48}
\end{equation}
which implies  \dref{1.1ddfgeddvbnmklllhyuisda}.

Likewise, given any $\varphi\in  C^\infty(\bar{\Omega})$, we may test the second equation in \dref{1.1fghyuisda} against
$\varphi$
to conclude  that
\begin{equation}
\begin{array}{rl}
\disp\left|\int_{\Omega}\partial_{t}c_{\varepsilon}(\cdot,t)\varphi\right|=&\disp{\left|\int_{\Omega}\left[\Delta c_{\varepsilon}-c_{\varepsilon}+n_{\varepsilon}-u_{\varepsilon}\cdot\nabla c_{\varepsilon}\right]\cdot\varphi\right|}
\\
=&\disp{\left|-\int_\Omega \nabla c_{\varepsilon}\cdot\nabla\varphi-\int_\Omega  c_{\varepsilon}\varphi+\int_\Omega n_{\varepsilon} \varphi+\int_\Omega c_{\varepsilon}u_\varepsilon\cdot\nabla\varphi\right|}\\
\leq&\disp{\left\{\|\nabla c_{\varepsilon}\|_{L^{{\frac{5}{3}}}(\Omega)}+\|c_{\varepsilon} \|_{L^{\frac{5}{3}}(\Omega)}+\|n_{\varepsilon} \|_{L^{\frac{5}{3}}(\Omega)}+\|c_{\varepsilon}u_\varepsilon\|_{L^{\frac{5}{3}}(\Omega)}\right\}\|\varphi\|_{W^{1,\frac{5}{2}}(\Omega)}
~~\mbox{for all}~~ t>0.}\\
\end{array}
\label{wwwwwqqqqgbhncvbmdcfxxdcvbccvbbvgcz2.5ghju48}
\end{equation}
Thus, due to
\dref{bnmbncz2.5ghhjuyuivvbnnihjj}, \dref{bnmbncz2.ffghh5ghhjuyuivvbnnihjj}--\dref{ddffbnmbnddfgcz2ddfvgbhh.htt678ddfghhhyuiihjj} and \dref{5555bnmbncz2ddfvgffghhbhh.htt678hyuiihjj}, in light of $m>2$ and the Young inequality,  we derive   that there exist positive constant $C_{8}$ and $C_{9}$ such that
\begin{equation}
\begin{array}{rl}
&\disp\int_0^T\|\partial_{t}c_{\varepsilon}(\cdot,t)\|^{\frac{5}{3}}_{(W^{1,\frac{5}{2}}(\Omega))^*}dt\\
\leq&\disp{C_{8}\left(\int_0^T\int_\Omega|\nabla c_{\varepsilon}|^{2}+\int_0^T\int_\Omega n_{\varepsilon}^\frac{8(m-1)}{3}+\int_0^T\int_\Omega c_\varepsilon^{\frac{40(m-1)}{9}}+\int_0^T\int_\Omega |u_\varepsilon|^{\frac{10}{3}}+T\right)}\\
\leq&\disp{C_{9}(T+1)
~~\mbox{for all}~~ T>0.}\\
\end{array}
\label{wwwwwqqqqgbhncvbmdcfxxxcvbddfghnxdcvbbbvgcz2.5ghju48}
\end{equation}
Hence, \dref{wwwwwqqqq1.1dddfgbhnjmdfgeddvbnmklllhyussddisda} holds.

Finally, for any given  $\varphi\in C^{\infty}_{0,\sigma} (\Omega;\mathbb{R}^3)$, we infer from the third equation in \dref{1.1fghyuisda} that
\begin{equation}
\begin{array}{rl}
\disp\left|\int_{\Omega}\partial_{t}u_{\varepsilon}(\cdot,t)\varphi\right|=&\disp{\left|-\int_\Omega \nabla u_{\varepsilon}\cdot\nabla\varphi-\kappa\int_\Omega (Y_{\varepsilon}u_{\varepsilon}\otimes u_{\varepsilon})\cdot\nabla\varphi+\int_\Omega n_{\varepsilon}\nabla \phi\cdot\varphi\right|
~~\mbox{for all}~~ t>0.}\\
\end{array}
\label{wwwwwqqqqgbhncvbmdcfxxdcvbccvqqwerrbbvgcz2.5ghju48}
\end{equation}
 Now,   by \dref{bnmbncz2.5ghhjuyuivvbnnihjj}, \dref{bnmbncz2.ffghh5ghhjuyuivvbnnihjj} and \dref{ssdcfvgdhhjjdfghgghjjnnhhkklld911cz2.5ghju48}, we also get  that there exist positive constants
 $C_{10},C_{11}$ and $C_{12}$ such that
\begin{equation}
\begin{array}{rl}
&\disp\int_0^T\|\partial_{t}u_{\varepsilon}(\cdot,t)\|^{2}_{(W^{1,2}(\Omega))^*}dt\\
\leq&\disp{C_{10}\left(\int_0^T\int_\Omega|\nabla u_{\varepsilon}|^{2}+\int_0^T\int_\Omega |Y_{\varepsilon}u_{\varepsilon}\otimes u_{\varepsilon}|^{2}+\int_0^T\int_\Omega n_\varepsilon^{2}\right)}\\
\leq&\disp{C_{11}\left(\int_0^T\int_\Omega|\nabla u_{\varepsilon}|^{2}+\int_0^T\int_\Omega |Y_{\varepsilon}u_\varepsilon|^{2}+\int_0^T\int_\Omega n_{\varepsilon}^{\frac{8(m-1)}{3}}+T\right)}\\
\leq&\disp{C_{12}(T+1)
~~\mbox{for all}~~ T>0.}\\
\end{array}
\label{wwwwwqqqqgbhncvbmdcfxxxcvxxcvvbddfghnxdddffckkvbgtyyiiobddfffbvgcz2.5ghju48}
\end{equation}
Hence, \dref{wwwwwqqqq1.1dddfgkkllbhddffgggnjmdfgeddvbnmklllhyussddisda} is hold.
\end{proof}

In order to prove
the limit functions $n$ and $c$ gained below, we will rely on an
additional regularity estimate for %
$u_\varepsilon\cdot\nabla c_\varepsilon$,
$n_{\varepsilon}\nabla c_{\varepsilon}$ and $n_\varepsilon u_{\varepsilon}$.

\begin{lemma}\label{4455lemma45630hhuujjuuyytt}
Let $m>2$,
\dref{dd1.1fghyuisdakkkllljjjkk} and \dref{ccvvx1.731426677gg}
 hold.
 Then for any $T>0, $
  one can find $C > 0$ independent of $\varepsilon$ such that 
\begin{equation}
 \begin{array}{ll}
  \disp\int_0^T\int_{\Omega}|n_\varepsilon \nabla c_\varepsilon|^{\frac{8(m-1)}{4m-1}} \leq C(T+1)\\
   \end{array}\label{1.1dddfgbhnjmdfgeddvbnmklllhyussddisda}
\end{equation}
and
\begin{equation}
 \begin{array}{ll}
  \disp\int_0^T\int_{\Omega}|u_\varepsilon\cdot\nabla c_\varepsilon|^{\frac{5}{4}}   \leq C(T+1).\\
   \end{array}\label{1.1dddfgbhnjmdfgeddvbnmklllhyussdddfgggdisda}
\end{equation}
\end{lemma}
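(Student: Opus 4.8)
The plan is to obtain both bounds by elementary Hölder splittings over the space--time cylinder $\Omega\times(0,T)$ that exactly match the a priori estimates already secured in Lemmata \ref{lemmaghjssddgghhmk4563025xxhjklojjkkk} and \ref{lemmddaghjssddgghhmk4563025xxhjklojjkkk}, together with the $L^{10/3}$ space--time bound for $u_\varepsilon$ derived inside the proof of Lemma \ref{qqqqlemma45630hhuujjuuyytt}. No new energy inequality is needed; the whole point is that the exponents $\frac{8(m-1)}{4m-1}$ and $\frac{5}{4}$ have been chosen precisely so that each factor lands on an integral already controlled by $C(T+1)$.

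For \dref{1.1dddfgbhnjmdfgeddvbnmklllhyussddisda}, first I would write $|n_\varepsilon\nabla c_\varepsilon|^{\frac{8(m-1)}{4m-1}}=n_\varepsilon^{\frac{8(m-1)}{4m-1}}|\nabla c_\varepsilon|^{\frac{8(m-1)}{4m-1}}$ and apply the Hölder inequality with the conjugate pair $\alpha=\frac{4m-1}{3}$ and $\beta=\frac{4m-1}{4(m-1)}$, noting that $\frac1\alpha+\frac1\beta=\frac{3}{4m-1}+\frac{4(m-1)}{4m-1}=1$. This choice is forced: it raises $n_\varepsilon$ to the power $\frac{8(m-1)}{4m-1}\alpha=\frac{8(m-1)}{3}$ and $\nabla c_\varepsilon$ to the power $\frac{8(m-1)}{4m-1}\beta=2$, so that the two resulting factors are exactly $\int_0^T\int_\Omega n_\varepsilon^{\frac{8(m-1)}{3}}$, bounded by $C(T+1)$ via \dref{bnmbncz2.ffghh5ghhjuyuivvbnnihjj}, and $\int_0^T\int_\Omega|\nabla c_\varepsilon|^2$, bounded by $C$ via \dref{bnmbncz2.5ghhjuyuivvbnnihjj}. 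Since $\frac{3}{4m-1}<1$ for $m>2$ and $T+1\ge1$, the product is dominated by $C(T+1)^{\frac{3}{4m-1}}\le C(T+1)$.

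For \dref{1.1dddfgbhnjmdfgeddvbnmklllhyussdddfgggdisda}, I would first use $|u_\varepsilon\cdot\nabla c_\varepsilon|\le|u_\varepsilon|\,|\nabla c_\varepsilon|$ pointwise and then split $|u_\varepsilon|^{5/4}|\nabla c_\varepsilon|^{5/4}$ by Hölder with the conjugate exponents $\frac{8}{3}$ and $\frac{8}{5}$. This sends $u_\varepsilon$ to the power $\frac54\cdot\frac83=\frac{10}{3}$ and $\nabla c_\varepsilon$ to the power $\frac54\cdot\frac85=2$, giving the product of $\bigl(\int_0^T\int_\Omega|u_\varepsilon|^{10/3}\bigr)^{3/8}$, controlled by $C(T+1)$ through \dref{5555bnmbncz2ddfvgffghhbhh.htt678hyuiihjj}, and $\bigl(\int_0^T\int_\Omega|\nabla c_\varepsilon|^2\bigr)^{5/8}$, controlled by $C$ through \dref{bnmbncz2.5ghhjuyuivvbnnihjj}; again $(T+1)^{3/8}\le T+1$ closes the estimate.

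The argument is genuinely routine, so I do not expect a serious obstacle; the only thing requiring care is the bookkeeping of exponents, namely verifying that the two Hölder pairs are conjugate and that they redistribute the powers onto precisely the integrals $\int n_\varepsilon^{8(m-1)/3}$, $\int|u_\varepsilon|^{10/3}$ and $\int|\nabla c_\varepsilon|^2$ for which uniform-in-$\varepsilon$ control of order $C(T+1)$ is already in hand. The hypothesis $m>2$ enters only to keep $\frac{3}{4m-1}<1$, so that the powers of $T+1$ produced by Hölder never exceed one.
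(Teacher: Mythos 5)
Your proof is correct and takes essentially the same route as the paper: the paper establishes both bounds by exactly the same H\"{o}lder splittings, pairing $\int_0^T\int_\Omega n_\varepsilon^{\frac{8(m-1)}{3}}$ with $\int_0^T\int_\Omega|\nabla c_\varepsilon|^2$ for the first estimate and $\int_0^T\int_\Omega|u_\varepsilon|^{\frac{10}{3}}$ with $\int_0^T\int_\Omega|\nabla c_\varepsilon|^2$ for the second, invoking the same a priori estimates you cite. If anything, your exponent bookkeeping is the cleaner one: the paper's first display attaches the powers $\frac{3}{4m-1}$ and $\frac{4(m-1)}{4m-1}$ to the wrong factors (a harmless slip, since the two exponents sum to $1$ and both integrals are $O(T+1)$), and it labels the step as the Young inequality although it is really H\"{o}lder's.
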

\begin{proof}
In light of
 \dref{bnmbncz2.5ghhjuyuivvbnnihjj}, \dref{ddffbnmbnddfgcz2ddfvgbhh.htt678ddfghhhyuiihjj}, \dref{5555bnmbncz2ddfvgffghhbhh.htt678hyuiihjj}
   and   the Young inequality, we derive that  there exist positive
constants $C_{1}$ and $C_2$ such that 
\begin{equation}
\begin{array}{rl}
\disp\int_{0}^T\disp\int_{\Omega}|n_{\varepsilon}\nabla c_{\varepsilon}|^{\frac{8(m-1)}{4m-1}}
\leq&\disp{\left(\int_0^T\int_{\Omega}|\nabla c_{\varepsilon}|^{2}\right)^{\frac{3}{4m-1}}\left(\int_0^T\int_{\Omega}n_{\varepsilon}^{\frac{8(m-1)}{3}}\right)^{\frac{4(m-1)}{4m-1}}}\\
\leq&\disp{C_{1}(T+1)~~\mbox{for all}~~ T > 0}\\
\end{array}
\label{ddfff5555ddffbnmbncz2ddfvgffgtyybhh.htt678ghhjjjddfghhhyuiihjj}
\end{equation}
and
\begin{equation}
\begin{array}{rl}
\disp\int_{0}^T\disp\int_{\Omega}|u_\varepsilon\cdot\nabla c_\varepsilon|^{\frac{5}{4}}
\leq&\disp{\left(\int_0^T\int_{\Omega}|\nabla c_{\varepsilon}|^{2}\right)^{\frac{5}{8}}
\left(\int_0^T\int_{\Omega}|u_{\varepsilon}|^{\frac{10}{3}}\right)^{\frac{3}{8}}}\\
\leq&\disp{C_{2}(T+1)~~\mbox{for all}~~ T > 0.}\\
\end{array}
\label{ddfff5555ddffbnmbncz2ddfvgffgddfftyybhh.httff678ghhjjjddfghhhyuiihjj}
\end{equation}
These  readily establish \dref{1.1dddfgbhnjmdfgeddvbnmklllhyussddisda} and \dref{1.1dddfgbhnjmdfgeddvbnmklllhyussdddfgggdisda}.

\end{proof}


%
%

\section{Passing to the limit. Proof of Theorem  \ref{theorem3}}
With the above compactness properties at hand, by means of a standard extraction procedure we can
now derive the following lemma which actually contains our main existence result already.

%
%
%
{\bf The proof of Theorem  \ref{theorem3}}
Firstly, 
in light of Lemmata \ref{lemmaghjssddgghhmk4563025xxhjklojjkkk}--\ref{lemmddaghjssddgghhmk4563025xxhjklojjkkk} and \ref{qqqqlemma45630hhuujjuuyytt},
we conclude that there exists a positive constant $C_1$  such that
\begin{equation}
\begin{array}{rl}
\|n_{\varepsilon}^{m-1}\|_{L^{2}_{loc}([0,\infty); W^{1,2}(\Omega))}\leq C_1(T+1)~~~\mbox{and}~~~\|\partial_{t}n_{\varepsilon}^{m-1}\|_{L^{1}_{loc}([0,\infty); (W^{2,q}(\Omega))^*)}\leq C_2(T+1)
\end{array}
\label{ggjjssdffzcddffcfccvvfggvvvvgccvvvgjscz2.5297x963ccvbb111kkuu}
\end{equation}
as well as
\begin{equation}
\begin{array}{rl}
\|c_{\varepsilon}\|_{L^{2}_{loc}([0,\infty); W^{1,2}(\Omega))}\leq C_1(T+1)~~~\mbox{and}~~~\|\partial_{t}c_{\varepsilon}\|_{L^{1}_{loc}([0,\infty); (W^{1,\frac{5}{2}}(\Omega)))^*)}\leq C_1(T+1)
\end{array}
\label{ggjjssdffzcddffcfccvvfggvvddffvvgccvvvgjscz2.5297x963ccvbb111kkuu}
\end{equation}
and
\begin{equation}
\begin{array}{rl}
\|u_{\varepsilon}\|_{L^{2}_{loc}([0,\infty); W^{1,2}(\Omega))}\leq C_1(T+1)~~~\mbox{and}~~~\|\partial_{t}u_{\varepsilon}\|_{L^{1}_{loc}([0,\infty); (W^{1,2}(\Omega))^*)}\leq C_1(T+1).
\end{array}
\label{ggjjssdffzcddffcfccvvfggvvvvgccffghhvvvgjscz2.5297x963ccvbb111kkuu}
\end{equation}
Hence, collecting
\dref{ggjjssdffzcddffcfccvvfggvvddffvvgccvvvgjscz2.5297x963ccvbb111kkuu}--\dref{ggjjssdffzcddffcfccvvfggvvvvgccffghhvvvgjscz2.5297x963ccvbb111kkuu} and  employing the the Aubin-Lions lemma (see e.g.
\cite{Simon}), we conclude that
\begin{equation}
\begin{array}{rl}
(c_{\varepsilon})_{\varepsilon\in(0,1)}~~~\mbox{is strongly precompact in}~~~L^{2}_{loc}(\bar{\Omega}\times[0,\infty))
\end{array}
\label{ggjjssdffzcddddffcfcchhvvfggvvddffvvgccvvvgjscz2.5297x963ccvbb111kkuu}
\end{equation}
and
\begin{equation}
\begin{array}{rl}
(u_{\varepsilon})_{\varepsilon\in(0,1)}~~~\mbox{is strongly precompact in}~~~L^{2}_{loc}(\bar{\Omega}\times[0,\infty)).
\end{array}
\label{ggjjssdffzcccddffcfccvvfggvvggvvgccffghhvvvgjscz2.5297x963ccvbb111kkuu}
\end{equation}
Therefore, there exists a subsequence $\varepsilon=\varepsilon_j\subset (0,1)_{j\in \mathbb{N}}$
and the limit functions $n$ and $c$
such that
\begin{equation}
c_\varepsilon\rightarrow c ~~\mbox{in}~~ L^{2}_{loc}(\bar{\Omega}\times[0,\infty))~~\mbox{and}~~\mbox{a.e.}~~\mbox{in}~~\Omega\times(0,\infty),
 \label{zjscz2.5297x963ddfgh06662222tt3}
\end{equation}
\begin{equation}
u_\varepsilon\rightarrow u~~\mbox{in}~~ L_{loc}^2(\bar{\Omega}\times[0,\infty))~~\mbox{and}~~\mbox{a.e.}~~\mbox{in}~~\Omega\times(0,\infty),
 \label{zjscz2.5297x96302222t666t4}
\end{equation}
\begin{equation}
\nabla c_\varepsilon\rightharpoonup \nabla c~~\begin{array}{ll}
 \mbox{in}~~ L_{loc}^{2}(\bar{\Omega}\times[0,\infty)),
   \end{array}\label{1.1ddfgghhhge666ccdf2345ddvbnmklllhyuisda}
\end{equation}
and
\begin{equation}
 \nabla u_\varepsilon\rightharpoonup \nabla u ~~\mbox{ in}~~L^{2}_{loc}(\bar{\Omega}\times[0,\infty)).
 \label{zjscz2.5297x96366602222tt4455}
\end{equation}
Next, in view of \dref{ggjjssdffzcddffcfccvvfggvvvvgccvvvgjscz2.5297x963ccvbb111kkuu}, an Aubin--Lions lemma (see e.g.
\cite{Simon}) applies to yield strong precompactness of $(n_\varepsilon^{m-1})_{\varepsilon\in(0,1)}$ in
$L^2(\Omega\times(0,T)),$ whence along a suitable subsequence we may derive that $n_\varepsilon^{m-1}\rightarrow z^{m-1}_1$
 and hence $n_{\varepsilon}\rightarrow z_1$ a.e. in $\Omega\times(0,\infty)$ for some nonnegative measurable $z_1 : \Omega\times(0,\infty)\rightarrow \mathbb{R}$.
  Now, with the help of the Egorov
theorem, we conclude  that necessarily $z_1 = n,$ thus
\begin{equation} n_\varepsilon\rightarrow n ~~\mbox{a.e.}~~ \mbox{in}~~ \Omega\times (0,\infty).
\label{z666jscz2.5297xddrfff9630222222}
\end{equation}
Therefore, observing that ${\frac{8(m-1)}{4m-1}}>1,{\frac{8(m-1)}{3}}>1,$ due to \dref{5555ddffbnmbncz2ddfvgffgtyybhh.htt678ghhjjjddfghhhyuiihjj}--\dref{5555bnmbncz2ddfvgffghhbhh.htt678hyuiihjj}, \dref{ddffbnmbnddfgcz2ddfvgbhh.htt678ddfghhhyuiihjj},
there exists a subsequence $\varepsilon=\varepsilon_j\subset (0,1)_{j\in \mathbb{N}}$
such that
$\varepsilon_j\searrow 0$ as $j \rightarrow\infty$ 
\begin{equation}
(n_{\varepsilon}+\varepsilon)^{m-1}\nabla n_{\varepsilon}\rightharpoonup n^{m-1}\nabla n~~\begin{array}{ll}
\mbox{in}~~~L_{loc}^{\frac{8(m-1)}{4m-1}}(\bar{\Omega}\times[0,\infty))\\
   \end{array}\label{1.1666ddccvvfggfgghhhgeccdf2345ddvbnmklllhyuisda}
\end{equation}
as well as
\begin{equation}
u_\varepsilon\rightharpoonup u ~~\mbox{in}~~ L^{\frac{10}{3}}_{loc}(\bar{\Omega}\times[0,\infty))
 \label{zjscz2.5297x66696302222tt4}
\end{equation}
and
\begin{equation}
 n_\varepsilon\rightharpoonup n~~ \begin{array}{ll}
 \mbox{in}~~ L_{loc}^{\frac{8(m-1)}{3}}(\bar{\Omega}\times[0,\infty)).
   \end{array}\label{1.1666ddfgffgghheccdf2345ddvbnmklllhyuisda}
\end{equation}
Next, let $g_\varepsilon(x, t) := -c_\varepsilon+n_{\varepsilon}-u_{\varepsilon}\cdot\nabla c_{\varepsilon}.$
Therefore, recalling \dref{ddffbnmbnddfgcz2ddfvgbhh.htt678ddfghhhyuiihjj}, \dref{bnmbncz2.5ghhjuyuivvbnnihjj} and \dref{1.1dddfgbhnjmdfgeddvbnmklllhyussdddfgggdisda}, we conclude that $c_{\varepsilon t}-c_{\varepsilon } = g_\varepsilon$
is bounded in $L^{\frac{5}{4}} (\Omega\times(0, T))$ for any  $\varepsilon\in(0,1)$,  we may invoke the standard parabolic regularity theory  to infer that
$(c_{\varepsilon})_{\varepsilon\in(0,1)}$ is bounded in
$L^{\frac{5}{4}} ((0, T); W^{2,\frac{5}{4}}(\Omega))$.
Thus,  by \dref{wwwwwqqqq1.1dddfgbhnjmdfgeddvbnmklllhyussddisda} and the Aubin--Lions lemma we derive that  the relative compactness of $(c_{\varepsilon})_{\varepsilon\in(0,1)}$ in
$L^{\frac{5}{4}} ((0, T); W^{1,\frac{5}{4}}(\Omega))$. We can pick an appropriate subsequence which is
still written as $(\varepsilon_j )_{j\in \mathbb{N}}$ such that $\nabla c_{\varepsilon_j} \rightarrow z_2$
 in $L^{\frac{5}{4}} (\Omega\times(0, T))$ for all $T\in(0, \infty)$ and some
$z_2\in L^{\frac{5}{4}} (\Omega\times(0, T))$ as $j\rightarrow\infty$, hence $\nabla c_{\varepsilon_j} \rightarrow z_2$ a.e. in $\Omega\times(0, \infty)$
 as $j \rightarrow\infty$.
In view  of \dref{1.1ddfgghhhge666ccdf2345ddvbnmklllhyuisda} and  the Egorov theorem we conclude  that
$z_2=\nabla c,$ and whence
\begin{equation}
\nabla c_\varepsilon\rightarrow \nabla c~~\begin{array}{ll}
  ~\mbox{a.e.}~~\mbox{in}~~\Omega\times(0,\infty)~~~\mbox{as}~~\varepsilon =\varepsilon_j\searrow0.
   \end{array}\label{1.1ddhhyujiiifgghhhge666ccdf2345ddvbnmklllhyuisda}
\end{equation}
In the following, we shall prove $(n,c,u)$ is a weak solution of problem \dref{1.1} in Definition \ref{df1}.
In fact, with the help of  \dref{zjscz2.5297x963ddfgh06662222tt3}--\dref{zjscz2.5297x96366602222tt4455}, \dref{1.1666ddfgffgghheccdf2345ddvbnmklllhyuisda}, we can derive  \dref{dffff1.1fghyuisdakkklll}.
Now, by the nonnegativity of $n_\varepsilon$ and $c_\varepsilon$, we derive  $n \geq 0$ and $c\geq 0$. Next, due to
\dref{zjscz2.5297x96366602222tt4455} and $\nabla\cdot u_{\varepsilon} = 0$, we conclude that
$\nabla\cdot u = 0$ a.e. in $\Omega\times (0, \infty)$.
On the other hand, in view of \dref{bnmbncz2.5ghhjuyuivvbnnihjj} and \dref{ddffbnmbnddfgcz2ddfvgbhh.htt678ddfghhhyuiihjj}, we can infer from
\dref{1.1dddfgbhnjmdfgeddvbnmklllhyussddisda} that
$$
n_{\varepsilon}\nabla c_\varepsilon\rightharpoonup z_3~~\begin{array}{ll}
  ~~~\mbox{in}~~ L^{\frac{8(m-1)}{3}}(\Omega\times(0,T))~~\mbox{for each}~~ T \in(0, \infty).
   \end{array}
   $$
Next, due to \dref{zjscz2.5297x963ddfgh06662222tt3}, \dref{z666jscz2.5297xddrfff9630222222} and \dref{1.1ddhhyujiiifgghhhge666ccdf2345ddvbnmklllhyuisda}, we derive that
\begin{equation}n_\varepsilon\nabla c_\varepsilon\rightarrow n\nabla c~~~\mbox{a.e.}~~\mbox{in}~~\Omega\times(0,\infty)~~~\mbox{as}~~\varepsilon =\varepsilon_j\searrow0.
\label{1.1ddddfddffttyygghhyujiiifgghhhgffgge666ccdf2345ddvbnmklllhyuisda}
\end{equation}
Therefore, by the Egorov theorem, we can get $z_3= n\nabla c,$ and hence
\begin{equation}
n_{\varepsilon}\nabla c_\varepsilon\rightharpoonup n\nabla c~~\begin{array}{ll}
  ~~~\mbox{in}~~ L^{\frac{8(m-1)}{3}}(\Omega\times(0,T))~~\mbox{for each}~~ T \in(0, \infty).
   \end{array}\label{zxxcvvfgggjscddf4556ffg77ffcvvfggz2.5ty}
\end{equation}
Next, due to  $\frac{3}{8(m-1)}+\frac{3}{10}<\frac{3}{4}$, in view of  \dref{zjscz2.5297x66696302222tt4} and \dref{1.1666ddfgffgghheccdf2345ddvbnmklllhyuisda},
%
 we also  infer that for each $T\in(0, \infty)$
$$
n_\varepsilon u_\varepsilon\rightharpoonup z_4 ~~\mbox{ in}~~ L^{\frac{4}{3}}(\Omega\times(0,T))~~~\mbox{as}~~\varepsilon=\varepsilon_j\searrow0,
 $$
and moreover, \dref{zjscz2.5297x96302222t666t4} and \dref{z666jscz2.5297xddrfff9630222222} imply that
\begin{equation}
n_\varepsilon u_\varepsilon\rightarrow nu ~~\mbox{a.e.}~~\mbox{in}~~\Omega\times(0,\infty)~~~\mbox{as}~~\varepsilon =\varepsilon_j\searrow0,
 \label{zxxcvvfgggjscfffgyhhfggz2.529ddfgg7x963022ccd22tt4}
\end{equation}
which combined with the Egorov theorem 
 implies that
%
%
\begin{equation}
n_\varepsilon u_\varepsilon\rightharpoonup nu ~~\mbox{ in}~~ L^{\frac{4}{3}}(\Omega\times(0,T))~~~\mbox{as}~~\varepsilon=\varepsilon_j\searrow0
 \label{zxxcvvfgggjscddfffcvvfggz2.5297ffghhx96302222tt4}
\end{equation}
for each $T \in(0, \infty).$
As a straightforward consequence of \dref{zjscz2.5297x963ddfgh06662222tt3} and \dref{zjscz2.5297x96302222t666t4}, it holds that
\begin{equation}
c_\varepsilon u_\varepsilon\rightarrow cu ~~\mbox{ in}~~ L^{1}_{loc}(\bar{\Omega}\times(0,\infty))~~~\mbox{as}~~\varepsilon=\varepsilon_j\searrow0.
 \label{zxxcvvfgggjscddfffcvvfggz2.5297x96302222tt4}
\end{equation}

%
%
%
%
 Next, by  \dref{zjscz2.5297x96302222t666t4} and
using the fact that
 $\|Y_{\varepsilon}\varphi\|_{L^2(\Omega)} \leq \|\varphi\|_{L^2(\Omega)}(\varphi\in L^2_{\sigma}(\Omega))$
and
$Y_{\varepsilon}\varphi \rightarrow \varphi$ in $L^2(\Omega)$ as $\varepsilon\searrow0$, we derive that there exists a positive constant $C_2$ such that
\begin{equation}
\begin{array}{rl}
\left\|Y_{\varepsilon}u_{\varepsilon}(\cdot,t)-u(\cdot,t)\right\|_{L^2(\Omega)}  \leq&\disp{\left\|Y_{\varepsilon}[u_{\varepsilon}(\cdot,t)-u(\cdot,t)]\right\|_{L^2(\Omega)}+
\left\|Y_{\varepsilon}u(\cdot,t)-u(\cdot,t)\right\|_{L^2(\Omega)}}\\
\leq&\disp{\left\|u_{\varepsilon}(\cdot,t)-u(\cdot,t)\right\|_{L^2(\Omega)}+
\left\|Y_{\varepsilon}u(\cdot,t)-u(\cdot,t)\right\|_{L^2(\Omega)}}\\
\rightarrow&\disp{0~~\mbox{as}~~\varepsilon=\varepsilon_j\searrow0}\\
\end{array}
\label{ggjjssdffzccvvvvggjscz2.5297x963ccvbb111kkuu}
\end{equation}
and
\begin{equation}
\begin{array}{rl}
\left\|Y_{\varepsilon}u_{\varepsilon}(\cdot,t)-u(\cdot,t)\right\|_{L^2(\Omega)}^2  \leq&\disp{\left(\|Y_{\varepsilon}u_{\varepsilon}(\cdot,t)|\|_{L^2(\Omega)}+\|u(\cdot,t)|\|_{L^2(\Omega)}\right)^2}\\
\leq&\disp{\left(\|u_{\varepsilon}(\cdot,t)|\|_{L^2(\Omega)}+\|u(\cdot,t)|\|_{L^2(\Omega)}\right)^2}\\
\leq&\disp{C_2~~\mbox{for all}~~t\in(0,\infty)~~\mbox{and}~~\varepsilon\in(0,1). }\\
\end{array}
\label{ggjjssdffzccffggvvvvggjscz2.5297x963ccvbb111kkuu}
\end{equation}
Now, thus, by \dref{zjscz2.5297x96302222t666t4}, \dref{ggjjssdffzccvvvvggjscz2.5297x963ccvbb111kkuu} and \dref{ggjjssdffzccffggvvvvggjscz2.5297x963ccvbb111kkuu} and the dominated convergence theorem, we derive that
\begin{equation}
\begin{array}{rl}
\disp\int_{0}^T\|Y_{\varepsilon}u_{\varepsilon}(\cdot,t)-u(\cdot,t)\|_{L^2(\Omega)}^2dt\rightarrow0 ~~\mbox{as}~~\varepsilon=\varepsilon_j\searrow0 ~~~\mbox{for all}~~T>0.
\end{array}
\label{ggjjssdffzccffggvvvvgccvvvgjscz2.5297x963ccvbb111kkuu}
\end{equation}
which implies that
\begin{equation}
Y_\varepsilon u_\varepsilon\rightarrow u ~~\mbox{in}~~ L_{loc}^2([0,\infty); L^2(\Omega)).
 \label{zjscz2.5297x96302266622tt44}
\end{equation}
Now, collecting \dref{zjscz2.5297x96302222t666t4}  and \dref{zjscz2.5297x96302266622tt44}, we derive
\begin{equation}
\begin{array}{rl}
Y_{\varepsilon}u_{\varepsilon}\otimes u_{\varepsilon}\rightarrow u \otimes u ~~\mbox{in}~~L^1_{loc}(\bar{\Omega}\times[0,\infty))~~\mbox{as}~~\varepsilon=\varepsilon_j\searrow0.
\end{array}
\label{ggjjssdffzccfccvvfgghjjjvvvvgccvvvgjscz2.5297x963ccvbb111kkuu}
\end{equation}
Therefore, by \dref{zxxcvvfgggjscddf4556ffg77ffcvvfggz2.5ty}--\dref{zxxcvvfgggjscddfffcvvfggz2.5297x96302222tt4} and \dref{ggjjssdffzccfccvvfgghjjjvvvvgccvvvgjscz2.5297x963ccvbb111kkuu} we conclude that the integrability of
$n\nabla c, nu$ and $cu,u\otimes u$ in \dref{726291hh}.
 Finally, according to \dref{zjscz2.5297x963ddfgh06662222tt3}--\dref{zxxcvvfgggjscddfffcvvfggz2.5297x96302222tt4} and \dref{zjscz2.5297x96302266622tt44}--\dref{ggjjssdffzccfccvvfgghjjjvvvvgccvvvgjscz2.5297x963ccvbb111kkuu}, we may pass to the limit in
the respective weak formulations associated with the the regularized system \dref{1.1fghyuisda} and get
 the integral
identities \dref{eqx45xx12112ccgghh}--\dref{eqx45xx12112ccgghhjjgghh}.

{\bf Acknowledgement}:
 The authors are very grateful to the anonymous reviewers for their carefully reading and valuable suggestions
which greatly improved this work.
This work is partially supported by  the National Natural
Science Foundation of China (No. 11601215),
the
Natural Science Foundation of Shandong Province of China (No. ZR2016AQ17) and the Doctor Start-up Funding of Ludong University (No. LA2016006).

\end{document}